\newtheorem{theorem}{Theorem}[section]
\newtheorem{lemma}{Lemma}[section]
\newtheorem{corollary}[lemma]{Corollary}
\newtheorem{remark}[lemma]{Remark}
\def\lam{\lambda}
\def\Soc{{\rm Soc}}
\def\D{\mathcal{D}}
\def\B{\mathcal{B}}
\def\P{\mathcal{P}}
\def\B{\mathcal{B}}
\DeclareMathOperator{\Aut}{Aut} 
 \DeclareMathOperator{\Out}{Out}
\begin{document}
	
	\title{Block-transitive $t$-$(k^2,k,\lam)$ designs and simple exceptional groups of Lie type}
	\author{Xingyu Chen$^1$,Haiyan Guan$^{1,2}$\footnote{This work is
			supported by the National Natural Science Foundation
			of China (Grant No.12271173 ).} \\
		{\small\it  1. College of Mathematics and Physics, China Three Gorges University, }\\
		{\small\it  Yichang, Hubei, 443002, P. R. China}\\
		{\small\it 2. Three Gorges Mathematical Research Center, China Three Gorges University,}\\
		{\small\it  Yichang, Hubei, 443002, P. R. China}\\
		\date{}
	}
	
	\maketitle

	\date

	\begin{abstract}Let $G$ be an automorphism group of a nontrivial $t$-$(k^2,k,\lambda)$ design. In this paper, we prove that if $G$ is block-transitive, then the socle of $G$ cannot be a finite simple exceptional group of Lie type.
		
		\smallskip\noindent
		{\bf Keywords}: $t$-design, Automorphism, Block-transitivity,  Exceptional group of Lie type
		
		\medskip \noindent{\bf MR(2010) Subject Classification} 05B05, 05B25, 20B25
	\end{abstract}

\section{Introduction}

A $t$-$(v,k,\lambda)$ design $\mathcal{D}$ is a pair $(P,\mathcal{B})$, where $P$ is a set of $v$ points and $\mathcal{B}$ is a family of $b$ $k$-subsets (called blocks) of $P$, such that any $t$-subset of $P$ is contained in exactly $\lambda$ blocks. It is \textit{nontrivial} if $t<k<v$. An \textit{automorphism} of a design $\mathcal{D}$ is a permutation of the point set that preserves the block set. The group of all automorphisms of $\mathcal{D}$ is denoted by $\mathrm{Aut}(\mathcal{D})$, and any subgroup of $\mathrm{Aut}(\mathcal{D})$ is called an \textit{automorphism group} of $\D$. Let $G\leq \mathrm{Aut}(\mathcal{D})$, if $G$ acts transitively on the set of points (resp.~blocks), then the design $\mathcal{D}$ is said to be \textit{point-transitive} (resp.~\textit{block-transitive}), and $\mathcal{D}$ is called \textit{point-primitive} if $G$ is a primitive permutation group on the point set $P$. The socle of a finite group $G$, denoted by $\mathrm{Soc}(G)$, is the product of all its minimal normal subgroups. A finite group $G$ is called \textit{almost simple} if its socle $X$ is a non-abelian simple group, and $X\trianglelefteq G \leq \Aut(X)$.

In \cite{MR4481043}, Montinaro and Francot demonstrated that for a $2$-$(k^2,k,\lam)$ design with $\lam\mid k$, the flag-transitive automorphism group $G$ is either an affine group or an almost simple group, and possible designs have been studied in \cite{MR4561672,MR4516389,MR4481043}. In \cite{MR4830077}, Guan and Zhou generalized this result, showing that an automorphism group $G$ of a block-transitive $t$-$(k^2,k,\lam)$ design must be point-primitive, and   $G$ is either an affine group or an almost simple group. For a primitive permutation group of almost simple type, its socle can be an alternating group, a classical simple group, a simple exceptional group of Lie type or a sporadic simple group. The cases where $X$ is sporadic or alternating have been studied in \cite{MR4830077}. In this paper, we continue to study the almost simple block-transitive $t$-$(k^2,k,\lam)$ designs with the socle $X$ of $G$ being a simple exceptional group of Lie type. Our main result is the following theorem.
\begin{theorem}\label{theorem1}
	Let $\D$ be a $t$-$(k^2,k,\lam)$ design, $G$ be an almost simple  block-transitive automorphism group of $\D$. Then the socle $X$ of $G$ cannot be a finite simple exceptional group of Lie type.	
\end{theorem}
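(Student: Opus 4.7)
The plan is to combine the point-primitivity theorem of Guan and Zhou recalled in the introduction with the classification of maximal subgroups of the finite simple exceptional groups of Lie type and the arithmetic constraints forced by $v=k^{2}$. By that result, since $G$ is block-transitive, $G$ is automatically primitive on the $v=k^{2}$ points, so the point stabiliser $H:=G_{\alpha}$ is a maximal subgroup of $G$ and $v=k^{2}=[G:H]$. Writing $X=\Soc(G)$ and $q$ for the defining field order, the first strong restriction is that, modulo the factor $[G:X]\mid|\Out(X)|$, the index $[X:H\cap X]$ must make $v$ a perfect square.

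Next I would collect the standard identities
\[
b\binom{k}{t}=\lambda\binom{v}{t},\qquad bk=rv,
\]
which, together with $v=k^{2}$, give $b=\lambda k\prod_{i=1}^{t-1}\frac{k^{2}-i}{k-i}$, so in particular $k\mid b$ and the Fisher-type bound $b\ge v=k^{2}$. Block-transitivity identifies $b=[G:G_{B}]$, whence $b\mid|G|$ and $|G_{B}|=|G|/b$; moreover $G_{B}$ acts on the $k$-set $B$, so $|G_{B}|$ is bounded below by the index in $H$ of the flag stabiliser $G_{\alpha,B}$ for $\alpha\in B$, and bounded above by $k!\,|G_{B,(B)}|$. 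Rearranging yields inequalities of the form $|G|\ge\lambda\binom{k^{2}}{t}/\binom{k}{t}\ge k^{t}/t!$ which, since $|G|$ is a fixed polynomial of known degree in $q$, confine $q$ to a short range in terms of $k$.

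The core of the proof is then a case-by-case inspection of the ten families $\mathrm{Sz}(q)$, ${}^{2}G_{2}(q)$, $G_{2}(q)$, ${}^{3}D_{4}(q)$, ${}^{2}F_{4}(q)$, $F_{4}(q)$, $E_{6}(q)$, ${}^{2}E_{6}(q)$, $E_{7}(q)$ and $E_{8}(q)$. For each one uses the known list of maximal subgroups (Suzuki, Kleidman, Cooperstein and Malle for the small-rank families; Liebeck--Seitz for the large ones) and partitions them into parabolic subgroups, reductive subgroups of maximal rank, almost-simple subgroups of Lie type in the natural characteristic, almost-simple subgroups in cross-characteristic and the finite list of exotic almost-simple subgroups. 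In each class $[G:H]$ is an explicit polynomial in $q$; one shows that either $v$ is not a perfect square, or $k=\sqrt{v}$ fails the divisibility $k\mid b$, or the implied $\lambda$ is non-integral, or the inequality $b\le|G|$ is violated.

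The principal obstacle will be the reductive maximal-rank subgroups of $E_{7}(q)$ and $E_{8}(q)$: their indices are long products of cyclotomic-type factors in $q$ whose square status is arithmetically delicate and can survive the coarse polynomial tests for a few small $q$. For those residual candidates I would exploit the pinned value $|G_{B}|=|G|/b$ together with the embedding of $G_{B}$ into $\Sym(B)$: if $G_{B}$ admits no faithful transitive permutation representation of degree at most $k$, one contradicts its action on $B$; otherwise, combining with the subgroup structure of the reductive maximal subgroup of $G$ that could contain $G_{B}$ forces an incompatibility with $|G_{B}|=|G|/b$. Exotic small almost-simple subgroups such as the $\PSL_{2}(q)$ or $\PSL_{3}(q)$ embedded in large exceptional groups (via the Cohen-Wales or Kleidman-Ryba constructions) give $[G:H]$ much larger than $|G|^{1/2}$, so the bounds $v=k^{2}\le b\le|G|$ force $k$ to be very small and can be ruled out by direct verification.
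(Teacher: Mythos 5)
Your proposal has a genuine gap: it never invokes (or rederives) the key arithmetic fact that drives the entire argument, namely that for a block-transitive $t$-$(k^2,k,\lam)$ design $k+1$ divides \emph{every nontrivial subdegree} of $G$, hence $k+1\mid(|G_x|,v-1)$ (Lemma \ref{le0} and Corollary \ref{le2}, quoted from Guan--Zhou). This has two consequences you cannot do without. First, it gives $k\le |G_x|-1$, so $v=k^2<|G_x|^2$ and $|G|<|G_x|^3$; that is, $G_x$ is a \emph{large} maximal subgroup, and one may then appeal to the Alavi--Burness classification of large maximal subgroups (Lemma \ref{le6}), which is complete. Your plan instead runs through ``the known list of maximal subgroups'' of all ten families, but the full maximal subgroup lists of $E_7(q)$ and $E_8(q)$ are not known; without the largeness reduction your case division is not exhaustive. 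Second, the divisibility $k+1\mid\gcd$ of subdegrees is what produces the contradictions case by case: one bounds $k+1$ by a gcd of explicit subdegrees or by $(|G_x|,v-1)$, which is typically polynomially much smaller than $\sqrt v$, and this kills each candidate.

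The substitute constraints you propose are too weak to do this work. For $t=2$ one has $b=\lam k(k+1)$, so $k\mid b$ is automatic and carries no information; $\lam$ is a free parameter (only constrained by $b\mid |G|$ up to the block-stabiliser), so ``the implied $\lam$ is non-integral'' is not a usable test; and the Fisher-type chain $v\le b\le |G|$ reduces to $|G:G_x|\le|G|$, which is vacuous and certainly does not ``force $k$ to be very small'' for small almost-simple maximal subgroups as you claim. That leaves only the requirement that $v$ be a perfect square. This does dispose of some cases (e.g.\ ${}^2G_2(q)$ acting on cosets of $2\times L_2(q)$, where $q^2-q+1$ lies strictly between consecutive squares), but for the maximal-rank subgroups of $F_4$, $E_6^{\epsilon}$, $E_7$, $E_8$ the index $v$ is a long product of cyclotomic factors and proving it is never a square for all prime powers $q$ is a hard Diophantine problem with no elementary resolution; the paper never needs to attempt it because the subdegree divisibility already forces $k$ to be far smaller than $\sqrt v$. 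You would need to import Lemma \ref{le0} (together with the Liebeck--Saxl--Seitz unique $p$-power subdegree for the parabolic case and the Alavi--Bayat--Daneshkhah subdegree divisors of Table \ref{table:label2} for the non-parabolic cases) for your outline to close.
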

     Notably, reference \cite{MR4481043} demonstrates that for a $2$-$(k^2,k,\lam)$ design $\D$ with $\lam \mid k$, if $\D$ admits a flag-transitive automorphism group $G$, then the socle of $G$ cannot be a finite simple exceptional group of Lie type. This conclusion is presented in Theorem \ref{theorem1}.
\section{Preliminary results}
    In this section, we state some useful facts in both design theory and group theory.
    For two finite groups $H$ and $K$, the notations $(H.K), (H:K)$ and $ (H\times K)$ are used to represent an extension of $H$ by $K$, a split extension (or semidirect product) of $H$ by $K$ and the direct product of $H$ and $K$, respectively.
    Let $n$ be a positive integer, then $n_p$ denotes the $p$-part of $n$, that is to say, $n_p=p^t$ if $p^t\mid n$ but $p^{t+1}\nmid n$. And $n_{p'}$ denotes the $p'$-part of $n$, which means $n_{p'}=n/n_{p}$. In addition, for two positive integers $m$ and $n$, we use $(m,n)$ to denote the greatest common divisor of $m$ and $n$. 
The first lemma is an elementary result on subgroups of almost simple groups.
\begin{lemma}\label{lemax}{\rm \cite[Lemma 2.2]{MR3483948}}
	Let $G$ be an almost simple group with socle $X$, and let $H$ be maximal in $G$ not containing $X$. Then $G=HX$, and $|H|$ divides $|\Out(X)|\cdot |H\cap X|$.
\end{lemma}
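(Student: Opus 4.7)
The plan is to prove both assertions directly from the definition of an almost simple group, the maximality of $H$, and the second isomorphism theorem; no structure theory of $X$ beyond its simplicity is needed.

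First I would establish that $G = HX$. Since $X \trianglelefteq G$, the product $HX$ is automatically a subgroup of $G$, and it contains $H$. Maximality of $H$ in $G$ therefore forces $HX = H$ or $HX = G$. The first option would yield $X \le H$, contradicting the hypothesis that $H$ does not contain $X$, so $HX = G$.

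For the divisibility statement, I would apply the second isomorphism theorem to the normal subgroup $X$ of $G$, obtaining
\[
H/(H \cap X) \;\cong\; HX/X \;=\; G/X.
\]
This is where the almost simple hypothesis $X \trianglelefteq G \le \Aut(X)$ does its work: identifying $X$ with $\Inn(X)$ inside $\Aut(X)$, the quotient $G/X$ embeds into $\Aut(X)/\Inn(X) = \Out(X)$. Hence $|G/X|$ divides $|\Out(X)|$, and combining this with $|H| = |H\cap X| \cdot |G:X|$ yields $|H|$ divides $|\Out(X)|\cdot |H\cap X|$, as required.

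The main obstacle is essentially nonexistent: no classification of maximal subgroups, no case analysis, and no deep structural information about $X$ is required. The only mildly delicate point is checking that the homomorphism $G/X \to \Out(X)$ really is injective, which reduces to the standard fact $C_{\Aut(X)}(X) = 1$ for a non-abelian simple group $X$; this underlies the very definition of an almost simple group used here.
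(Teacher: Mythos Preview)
Your proof is correct and is exactly the standard argument for this well-known fact. Note that the paper does not actually prove this lemma: it simply quotes it as \cite[Lemma 2.2]{MR3483948}, so there is no in-paper proof to compare against; your argument is the natural one and would be acceptable in any context.
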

    Here, we collect and prove some useful results about $t$-$(v,k,\lambda)$ designs with $v=k^2$.
\begin{lemma}\label{le0}{\rm \cite[Corollary 2.1]{MR4830077}}
	Let $\D$ be a $t$-$(v,k,\lam)$ design with $v=k^2$, and let $n$ be a nontrivial subdegree of $G$. If $G\leq \Aut(\D)$ is block-transitive, then $k+1\mid n$.
\end{lemma}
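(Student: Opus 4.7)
\textbf{Proof plan for Lemma~\ref{le0}.} My plan is to combine block-transitivity with a double-counting of the pairs of $G$-related points lying inside each block, together with the numerical identities forced by $v=k^2$.

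First I would record the relevant parameters. From the standard 2-design identity $r(k-1)=\lambda_2(v-1)$ applied with $v=k^2$, so that $v-1=(k-1)(k+1)$, the replication number satisfies $r=\lambda_2(k+1)$, where $\lambda_2$ denotes the number of blocks through any two points. Then $bk=vr$ gives $b=k\lambda_2(k+1)$. Since the design is nontrivial and $G$ is block-transitive with $t\geq 2$, Block's lemma ensures that $G$ is point-transitive, so the notion of a subdegree is well-defined.

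Next, I would fix $\alpha\in P$ and pick $\beta$ in a nontrivial suborbit $\Delta=\beta^{G_\alpha}$ of size $n$. Let $O=(\alpha,\beta)^G\subseteq P\times P$ be the $G$-orbital through $(\alpha,\beta)$. By orbit-stabilizer together with point-transitivity, $|O|=v\cdot n=k^2n$, and every ordered pair in $O$ consists of distinct points since $\beta\neq\alpha$. For each block $B$ set $c(B)=|O\cap(B\times B)|$. Because $G$ preserves $O$ and is transitive on the block set, $c(B)$ is independent of $B$; call the common value $c$.

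I would then double-count the set $\{(\alpha',\beta',B):(\alpha',\beta')\in O,\ \alpha',\beta'\in B\}$. Summing over blocks gives $b\cdot c$, while summing over pairs in $O$ gives $|O|\cdot\lambda_2=k^2n\lambda_2$, since any two distinct points lie in exactly $\lambda_2$ blocks. Equating and substituting $b=k\lambda_2(k+1)$ yields $c=kn/(k+1)$. As $c$ is a non-negative integer and $\gcd(k,k+1)=1$, it follows that $(k+1)\mid n$.

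I do not expect a serious obstacle. The only subtle point is the appeal to Block's lemma for point-transitivity, which is what allows $|O|=vn$ and legitimizes the whole count; this is automatic once $t\geq 2$, which is implicit in the paper's design-theoretic setting.
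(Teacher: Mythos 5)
Your argument is correct: the orbital double count gives $bc=k^2n\lambda_2$, and combined with $b=k\lambda_2(k+1)$ (from $r(k-1)=\lambda_2(v-1)$ and $v-1=(k-1)(k+1)$) this forces $c=kn/(k+1)\in\mathbb{Z}$, hence $(k+1)\mid n$ since $\gcd(k,k+1)=1$. The paper itself gives no proof, citing the result as Corollary 2.1 of Guan--Zhou, and your derivation is essentially the standard block-transitive counting by which that corollary is obtained, so the only point worth noting is the one you already flagged: the appeal to Block's lemma and to the $2$-design identities requires $t\geq 2$, which is implicit in the setting.
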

    According to Lemma \ref{le0}, we get the following useful corollary:
\begin{corollary}\label{le2}
	Let $\D$ be a $t$-$(v,k,\lam)$ design with $v=k^2$, and let $x\in \P$. If $G\leq \Aut(\D)$ is block-transitive, then $k+1\mid(|G_{x}|, v-1)$.
\end{corollary}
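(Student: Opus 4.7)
The plan is to extract the two divisibility statements $k+1 \mid v-1$ and $k+1 \mid |G_x|$ separately, and then combine them to conclude that $k+1$ divides the gcd.

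First I would observe that the ``$v-1$'' part is free: since $v=k^2$, we have $v-1 = k^2-1 = (k-1)(k+1)$, so $k+1 \mid v-1$ directly. This needs nothing from the hypotheses beyond $v=k^2$.

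The substantive step is $k+1 \mid |G_x|$. For this I would invoke Lemma \ref{le0}, which says that every nontrivial subdegree of the block-transitive group $G$ on $\P$ is divisible by $k+1$. (Implicitly, one uses here the Guan--Zhou result quoted in the introduction that a block-transitive $G$ on a $t$-$(k^2,k,\lambda)$ design is automatically point-primitive, in particular point-transitive, so the notion of ``subdegree'' is well-defined.) Because $\D$ is nontrivial we have $k\geq 2$ and hence $v=k^2>1$, so $G_x$ has at least one nontrivial orbit on $\P$; pick such an orbit and let $n$ be its length. By Lemma \ref{le0}, $k+1\mid n$, while by the orbit--stabilizer theorem applied to $G_x$ acting on $\P$, we have $n\mid |G_x|$. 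Chaining the two divisibilities gives $k+1\mid |G_x|$.

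Combining the two conclusions yields $k+1\mid (|G_x|,\,v-1)$, which is the assertion. I do not anticipate any real obstacle: the corollary is a two-line consequence of Lemma \ref{le0} together with the elementary factorization of $k^2-1$, and the only thing one has to be slightly careful about is ensuring a nontrivial subdegree exists, which is immediate from nontriviality of the design.
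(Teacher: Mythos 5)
Your proof is correct and follows exactly the route the paper intends: the paper states Corollary \ref{le2} as an immediate consequence of Lemma \ref{le0} without writing out details, and your argument ($k+1\mid k^2-1=v-1$ trivially, plus $k+1\mid n\mid |G_x|$ for a nontrivial subdegree $n$ via Lemma \ref{le0} and orbit--stabilizer) is precisely the intended derivation. Your extra care about the existence of a nontrivial subdegree is a reasonable touch, though not something the paper bothers with.
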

\begin{remark}\label{com}
    \rm {Corollary \ref{le2} offers a quick way to rule out unfeasible candidates. By using the Magma (\cite{MR1484478}) command ``XGCD'', we can obtain the greatest common divisor of two polynomials (refer to \cite{MR4759664}).}
\end{remark}

\begin{corollary}\label{le1}
	Let $\D$ be a block-transitive $t$-$(v,k,\lam)$ design with $v=k^2$, and let $x\in \P$, then $|G|\leq(|G_{x}|-1)^2|G_{x}|$. In particular, we have $|G|<|G_{x}|^3$.
\end{corollary}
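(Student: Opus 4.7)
The plan is to combine point-transitivity of $G$ with the divisibility condition supplied by Corollary \ref{le2}. As recalled in the introduction, the Guan--Zhou theorem in \cite{MR4830077} shows that a block-transitive automorphism group of a nontrivial $t$-$(k^2,k,\lambda)$ design is point-primitive, hence point-transitive. So the first step is to apply the orbit-stabilizer theorem to conclude $|G| = v\cdot |G_x| = k^2 |G_x|$.

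Next I would invoke Corollary \ref{le2} to obtain $k+1 \mid (|G_{x}|,\, v-1)$. In particular $k+1$ divides the positive integer $|G_{x}|$, which forces $|G_{x}| \geq k+1$, equivalently $|G_{x}|-1 \geq k$. Squaring yields $(|G_{x}|-1)^2 \geq k^2 = v$, and multiplying through by $|G_{x}|$ delivers the desired inequality
\[
|G| \;=\; v\cdot |G_{x}| \;\leq\; (|G_{x}|-1)^2\,|G_{x}|.
\]

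For the ``in particular'' strict bound $|G| < |G_{x}|^3$, I would simply note that $|G_{x}| \geq k+1 \geq 2$ (since $k\geq 1$), so $(|G_{x}|-1)^2 < |G_{x}|^2$, and combining with the previous inequality finishes the argument.

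There is essentially no obstacle here: the corollary is a short bookkeeping consequence of Corollary \ref{le2} once point-transitivity is used to rewrite $|G|$ as $v\cdot |G_{x}|$. The only step deserving a brief remark is the passage from the divisibility $k+1 \mid |G_{x}|$ to the inequality $k+1 \leq |G_{x}|$, which relies only on the fact that $|G_{x}|$ is a positive integer.
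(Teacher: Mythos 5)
Your argument is correct and follows essentially the same route as the paper: both derive $k\leq |G_x|-1$ from Corollary \ref{le2}, deduce $v\leq(|G_x|-1)^2$ via $|G:G_x|=v$, and multiply by $|G_x|$ to conclude. Your extra remarks on point-primitivity and on $(|G_x|-1)^2<|G_x|^2$ only make explicit what the paper leaves implicit.
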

\begin{proof}
	By Corollary \ref{le2}, we get $k\leq |G_x|-1$, then $|G:G_x|=v\leq (|G_x|-1)^2$.
	Thus, $|G|\leq |G_x|(|G_x|-1)^2$, and then $|G|<|G_x|^3$.
\end{proof}
\medskip
    Let $G$ be a finite group, and $H\leq G$, then $H$ is said to be large if $|G|\leq|H|^3$ (\cite{MR3272379}). The following lemma state the classification of large maximal subgroups of almost simple groups with socle being a finite simple exceptional group of Lie type.
\begin{lemma}\label{le6}{\rm \cite[Theorem 1.2]{MR4401918}}
	Let $G$ be a finite almost simple group whose socle $X$ is a finite simple exceptional group of Lie type, and let $H$ be a maximal subgroup of $G$ not containing $X$. If $H$ is a large subgroup of $G$, then $H$ is either parabolic, or one of the subgroups listed in Table {\rm \ref{table:label1}}.
\end{lemma}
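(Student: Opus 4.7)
The statement is a classification result about maximal subgroups $H$ of a finite almost simple group $G$ with exceptional socle $X$, subject to the order bound $|G|\leq |H|^3$. The plan is to reduce to the full classification of maximal subgroups of such $G$ and then carry out case-by-case order comparisons to identify the large ones.

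First, I would fix $X$ and run through the list of finite simple exceptional groups of Lie type: ${}^2B_2(q)$, ${}^2G_2(q)$, ${}^2F_4(q)$, $G_2(q)$, ${}^3D_4(q)$, $F_4(q)$, $E_6^{\epsilon}(q)$, $E_7(q)$, $E_8(q)$. For each of these I would invoke the known structural description of maximal subgroups of $G$: for groups of small rank (Suzuki, Ree, $G_2$, ${}^3D_4$, ${}^2F_4$, $F_4$) the list is completely explicit due to work of Suzuki, Kleidman, Cooperstein, Malle and Liebeck--Saxl--Seitz; for the larger groups ($E_6$, $E_7$, $E_8$) the classification is due to Liebeck--Seitz and partitions maximal subgroups into several families: maximal parabolic subgroups; reductive subgroups of maximal rank (classified by Liebeck--Saxl--Seitz); normalizers of elementary abelian $r$-subgroups (local subgroups); almost simple subgroups whose socle is a simple group of Lie type in the defining characteristic; almost simple subgroups in cross characteristic; and a handful of exotic embeddings. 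I would also use Lemma \ref{lemax} to pass between $H$ and $H\cap X$ when checking largeness.

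Second, with the list in hand, I would verify the inequality $|G|\leq |H|^3$ for each family, writing $|G|\leq |X|\cdot |\mathrm{Out}(X)|$ and comparing with a cubed order estimate for $H$. For parabolic and maximal-rank reductive subgroups, the orders are given by standard polynomial formulas in $q$, so the inequality reduces to a polynomial comparison that is either satisfied (in which case the subgroup enters the table) or fails for all but finitely many small $q$, each of which can be treated individually. For subgroups of Lie type over a proper subfield $\mathbb{F}_{q_0}$, the ratio $|X|/|H|^3$ grows like a positive power of $q/q_0$, so only the subfield indices $2$ and $3$ can survive, and the surviving cases go into the table. For local and cross-characteristic almost simple subgroups, $|H|$ is bounded independently of $q$ (or grows very slowly), so the inequality fails outside a short list of small $q$, each of which is verified directly.

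The main obstacle will be the borderline reductive maximal subgroups of exceptional type inside exceptional groups, such as $F_4(q)<E_6(q)$, $G_2(q)<F_4(q)$, and various subsystem subgroups of $E_7(q)$ and $E_8(q)$; here the polynomial degrees in $q$ appearing in $|X|$ and $|H|^3$ are close enough that one must use the precise Liebeck--Seitz order formulas, track the outer-automorphism contribution carefully, and in some cases handle small-$q$ novelties arising from graph or field automorphisms. These delicate comparisons produce the bulk of the entries recorded in Table \ref{table:label1}, and once they are settled the remaining families are ruled out by the coarse polynomial bounds described above.
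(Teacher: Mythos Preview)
The paper does not prove this lemma at all: it is quoted verbatim as \cite[Theorem~1.2]{MR4401918} (Alavi--Bayat--Daneshkhah), which in turn refines the large-subgroup analysis of Alavi--Burness \cite{MR3272379}. So there is no ``paper's own proof'' to compare against; the authors simply import the classification as a black box.

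That said, your outline is an accurate sketch of how the result is actually established in those references: one starts from the Liebeck--Seitz structure theorem for maximal subgroups of exceptional groups, separates parabolic, maximal-rank reductive, subfield, exotic local, and almost simple (defining and non-defining characteristic) families, and then performs the polynomial-in-$q$ order comparison $|X|\le |H\cap X|^3\cdot |\mathrm{Out}(X)|^2$ case by case, with residual small-$q$ checks. Your identification of the delicate borderline cases (subsystem and subfield subgroups whose order exponent is close to one third of $\dim X$) is exactly where the work in \cite{MR3272379,MR4401918} is concentrated. One small caveat: the maximal subgroup lists for $E_7(q)$ and $E_8(q)$ were not unconditionally complete at the time, so the cited proofs rely on the Liebeck--Seitz bound $|H|<4q^{56}$ (resp.\ $12q^{30}$, etc.) for the ``unknown'' almost simple possibilities rather than an explicit enumeration; your sketch implicitly assumes a full list, which is fine in spirit but worth flagging if you were to write this out.
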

\begin{footnotesize}
	\begin{longtabu}{lll}
		\caption{Large maximal non-parabolic subgroups $H$ of almost simple groups $G$ with socle $X$ a finite simple exceptional group of Lie type}
		\label{table:label1}\\
		\hline
		$X$ & $H\cap X$ or type of $H$ & Conditions \\
		\hline
		\endfirsthead
		\hline
		$X$ & $H\cap X$ or type of $H$ & Conditions \\
		\hline
		\endhead
		\hline
		\endfoot
		$^{2}B_2(q)\ (q = 2^{2n + 1}\geqslant 8)$ & $(q + \sqrt{2q} + 1){:}4$ & $q = 8, 32$ \\
		& $^{2}B_2(q^{1/3})$ & $q>8, 3\mid 2n + 1$ \\
		$^{2}G_2(q)\ (q = 3^{2n + 1}\geqslant 27)$ & $A_1(q)$ &  \\
		& $^{2}G_2(q^{1/3})$ & $3\mid 2n + 1$ \\
		$^{3}D_4(q)$ & $A_1(q^{3})A_1(q),\ (q^{2}+\epsilon q + 1)A_2^{\epsilon}(q),\ G_2(q)$ & $\epsilon = \pm$ \\
		& $^{3}D_4(q^{1/2})$ & $q$ square \\
		& $7^2:SL_2(3)$ & $q = 2$ \\
		$^{2}F_4(q)\ (q = 2^{2n + 1}\geqslant 8)$ & $^{2}B_2(q)\wr 2,\ B_2(q):2,\ ^{2}F_4(q^{1/3})$ &  \\
		& $SU_3(q):2,\ PGU_3(q):2$ & $q = 8$ \\
		& $A_2(3):2,\ A_1(25),\ Alt_6\cdot 2^2,\ 5^2:4Alt_4$ & $q = 2$ \\
		$G_2(q)$ & $A_2^{\epsilon}(q),\ A_1(q)^2,\ G_2(q^{1/r})$ & $r = 2,3$ \\
		& $^{2}G_2(q)$ & $q = 3^a,\ a$ is odd \\
		& $G_2(2)$ & $q = 5,7$ \\
		& $A_1(13),\ J_2$ & $q = 4$ \\
		& $J_1$ & $q = 11$ \\
		& $2^3{.}A_2(2)$ & $q = 3,5$ \\
		$F_4(q)$ & $B_4(q),\ D_4(q),\ ^{3}D_4(q)$ &  \\
		& $F_4(q^{1/r})$ & $r = 2,3$ \\
		& $A_1(q)C_3(q)$ & $p\neq 2$ \\
		& $C_4(q),\ C_2(q^2),\ C_2(q)^2$ & $p = 2$ \\
		& $^{2}F_4(q)$ & $q = 2^{2n + 1}\geqslant 2$ \\
		& $^{3}D_4(2)$ & $q = 3$ \\
		& $Alt_{9},\ Alt_{10},\ A_3(3),\ J_2$ & $q = 2$ \\
		& $A_1(q)G_2(q)$ & $q>3$ odd \\
		& $Sym_6\wr Sym_2$ & $q = 2$ \\
		$E_6^{\epsilon}(q)$ & $A_1(q)A_5^{\epsilon}(q),\ F_4(q)$ &  \\
		& $(q - \epsilon)D_5^{\epsilon}(q)$ & $\epsilon = -$ \\
		& $C_4(q)$ & $p\neq 2$ \\
		& $E_6^{\pm}(q^{1/2})$ & $\epsilon = +$ \\
		& $E_6^{\epsilon}(q^{1/3})$ &  \\
		& $(q - \epsilon)^2{.}D_4(q)$ & $(\epsilon,q)\neq (+,2)$ \\
		& $(q^{2}+\epsilon q + 1){.}^3D_4(q)$ & $(\epsilon,q)\neq (-,2)$ \\
		& $J_3,\ Alt_{12},\ B_3(3),\ Fi_{22}$ & $(\epsilon,q)= (-,2)$ \\
		$E_7(q)$ & $(q - \epsilon)E_6^{\epsilon}(q),\ A_1(q)D_6(q),\ A_7^{\epsilon}(q),\ A_1(q)F_4(q),\ E_7(q^{1/r})$ & $\epsilon = \pm$ and $r = 2,3$ \\
		& $Fi_{22}$ & $q = 2$ \\
		$E_8(q)$ & $A_1(q)E_7(q),\ D_8(q),\ A_2^{\epsilon}(q)E_6^{\epsilon}(q),\ E_8(q^{1/r})$ & $\epsilon = \pm$ and $r = 2,3$ \\
		
		\hline
	\end{longtabu}
\end{footnotesize}
\begin{remark}
    \rm {For the standard notations and the orders of groups, the Atlas (\cite{MR827219}) can be consulted. Note that $^2B_2(2)$, $G_2(2)$, $^2G_2(3)$ and $^2F_4(2)$ are not simple groups (\cite[Theorem 5.1.1]{MR1057341}).
	In Table \ref{table:label1}, the type of $H$ provides an approximate description of the group-theoretic structure of $H$, for a precise structural analysis: refer to \cite{MR162840} for $^2B_2(q)$, \cite{MR955589} for $^2G_2(q)$, \cite{MR1106340} for $^2F_4(q)$, \cite{MR937609} for $^3D_4(q)$, \cite{MR618376,MR955589} for $G_2(q)$, and \cite[Table 5.1]{MR1168190} for $F_4(q)$, $E_6(q)$, $^2E_6(q)$, $E_7(q)$ and $E_8(q)$.}
\end{remark}
\begin{lemma}\label{le7}{\rm \cite[Theorem 4.1]{MR4401918}}
	Let $G$ be an almost simple group with socle $X=X(q)$ being an exceptional group of Lie type, and let $H$ be a maximal subgroup of $G$ as in Table {\rm \ref{table:label2}}. Then the action of $G$ on the cosets of $H$ has subdegrees dividing $|H:K|$, where $K$ is the subgroup of H listed in the third column of Table {\rm \ref{table:label2}}.
\end{lemma}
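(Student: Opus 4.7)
The plan is to translate the statement about subdegrees into a statement about double cosets and then exploit the detailed structure of maximal subgroups of the exceptional groups row-by-row in Table \ref{table:label2}. Recall that the subdegrees of $G$ acting on $G/H$ are exactly the orbit lengths of $H$ on $G/H$, and if $HgH$ is a non-trivial double coset then the corresponding $H$-orbit has length $|H:H\cap H^g|$. Thus it suffices to prove the following stronger assertion: for every $g\in G\setminus H$, some $H$-conjugate $K^h$ of $K$ is contained in $H\cap H^g$, for then $|H:H\cap H^g|$ divides $|H:K^h|=|H:K|$.

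With this reduction in hand, I would argue case-by-case through the rows of Table \ref{table:label2}, fixing the socle $X=X(q)$ and the pair $(H,K)$ at each step. For each row, the candidate subgroup $K$ is read off from the explicit structural descriptions of the relevant maximal subgroups, as found in \cite{MR162840} for ${}^2B_2(q)$, \cite{MR955589} for ${}^2G_2(q)$ and $G_2(q)$, \cite{MR937609} for ${}^3D_4(q)$, \cite{MR1106340} for ${}^2F_4(q)$, \cite{MR618376} for $G_2(q)$, and \cite[Table 5.1]{MR1168190} for the remaining groups $F_4(q)$, $E_6^{\pm}(q)$, $E_7(q)$, $E_8(q)$. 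The prescription for $K$ is always a ``rigid'' subgroup of $H$, typically a Levi factor, a maximal torus, a centralizer of a prescribed semisimple element of $H$, or a characteristic subgroup of a parabolic; in each case $K$ is engineered so that the normaliser $N_G(K)$ admits a transparent description.

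The main obstacle is to control the intersection $H\cap H^g$ uniformly as $g$ ranges over double coset representatives. The strategy I would use splits into two steps. First, I would pick an element $z\in K$ of prime order whose $G$-centraliser is small and known explicitly (for instance an element inverting or fixing a unique maximal torus, whenever $K$ is torus-like). If $z\in H^g$, then the Sylow or class structure of $C_G(z)$ forces $K^h\leq H^g$ for some $h\in H$, yielding the required containment. Second, to handle double cosets where no such rigid element of $K$ lies in $H^g$, I would apply the Lang–Steinberg theorem to lift $g$ to the ambient simple algebraic group $\overline{X}$ and analyse the intersection of algebraic subgroups containing $H$ and $H^g$, then descend to fixed points of the Frobenius endomorphism. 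The delicate sub-case is that of the small-characteristic and twisted exceptions (e.g.\ ${}^2B_2(8)$, ${}^3D_4(2)$, ${}^2F_4(2)$, $F_4(2)$, $E_6^-(2)$, $E_7(2)$), where the algebraic argument breaks down and one must instead perform a direct \textsc{Magma} computation in the relevant permutation representation to verify the divisibility.

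A final sanity check is to confirm that the candidate $K$ listed in the third column is sharp enough for the intended application: summing the orbit lengths predicted by the double coset data should recover $|G:H|$, and the bound $|H:K|$ should be the smallest common multiple of the non-trivial subdegrees encountered. I expect the delicate step to be the uniform choice of the rigid element $z\in K$ across the infinite families of Table \ref{table:label2}, since the conjugacy class structure of semisimple elements in exceptional groups is highly sensitive to the isogeny type and to the residues of $q$ modulo small primes.
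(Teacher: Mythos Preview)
The paper does not prove this lemma at all; it is quoted as \cite[Theorem~4.1]{MR4401918} and used as a black box throughout Section~3. There is therefore no argument in the present paper to compare your proposal against.

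More importantly, your reduction misreads the statement. The lemma asserts, for each row of Table~\ref{table:label2}, the existence of \emph{some} nontrivial subdegree dividing $|H:K|$; it does \emph{not} claim that every subdegree divides $|H:K|$. This is visible in how the paper applies the lemma: in Lemma~\ref{F_{4}(q)} Cases~(2)--(3), Lemma~\ref{E_{6}(q)} Cases~(1) and~(3), Lemma~\ref{E_{7}(q)} Case~(1), and Lemma~\ref{E_{8}(q)} Case~(1), two different rows of Table~\ref{table:label2} with the same $H_0$ but distinct $K$ are invoked to produce two distinct subdegree divisors $n_1,n_2$, and then one passes to $(n_1,n_2)$. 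That manoeuvre would be vacuous if all subdegrees already divided the smaller $|H:K|$. Consequently your ``stronger assertion'' --- that for every $g\in G\setminus H$ some $H$-conjugate of $K$ lies in $H\cap H^g$ --- is in general false for the $K$ listed, and the Lang--Steinberg/centraliser machinery you propose is aimed at the wrong target.

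The argument actually used in \cite{MR4401918} goes in the opposite direction: for each listed $K\le H$ one exhibits a single $g\in N_G(K)\setminus H$ (equivalently, one checks $N_G(K)\not\le H$), whence $K=K^{g}\le H\cap H^{g}$ and the subdegree attached to the double coset $HgH$ divides $|H:K|$. Verifying $N_G(K)\not\le H$ row by row is straightforward from the maximal-rank subgroup data in \cite{MR1168190} and the other structural references you already cite; no uniform choice of a ``rigid element'' or algebraic-group lifting is needed.
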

\begin{footnotesize}
	\begin{longtabu}{lllll}
		\caption{Some subdegrees of finite exceptional Lie type groups}
		\label{table:label2}\\
		\hline
		$X$&$H_0$&$K$&Conditions&\\
		\hline
		\endfirsthead
		\hline
		$X$&$H_0$&$K$&Conditions&\\
		\hline
		\endhead
		\hline
		\endfoot
		
		$E_8(q)$&$A_1(q)E_7(q)$&$A_1(q)A_1(q)D_6(q)$&$$\\
		$E_8(q)$&$A_1(q)E_7(q)$&$E_6^{\epsilon}(q)$&$\epsilon=\pm$\\
		$E_7(q)$&$A_1(q)D_6(q)$&$A_5^{\epsilon}(q)$&$\epsilon=\pm$\\
		$E_7(q)$&$A_7^{\epsilon}(q)$&$D_4(q)$&$q$ odd\\
		$E_7(q)$&$A_7^{\epsilon}(q)$&$C_4(q)$&$q>2$ even\\
		$E_7(q)$&$E_6^{\epsilon}(q)T_1^{\epsilon}$&$F_4(q)$&$(q,\epsilon)\neq (2,-)$\\
		$E_7(q)$&$E_6^{\epsilon}(q)T_1^{\epsilon}$&$D_5^{\epsilon}(q)$&$(q,\epsilon)\neq (2,-)$\\
		$E_6^{\epsilon}(q)$&$A_1(q)A_5^{\epsilon}(q)$&$A_2^{\epsilon}(q)A_2^{\epsilon}(q)$&$$\\
		$E_6^{\epsilon}(q)$&$A_1(q)A_5^{\epsilon}(q)$&$A_2(q^2)$&$$\\
		$E_6^{\epsilon}(q)$&$D_5^{\epsilon}T_1^{\epsilon}$&$D_4(q)$&$$\\
		$E_6^{\epsilon}(q)$&$D_5^{\epsilon}T_1^{\epsilon}$&$A_4^{\epsilon}(q)$&$(q,\epsilon)\neq (2,-)$\\
		$E_6^{\epsilon}(q)$&$C_4$&$C_2(q)C_2(q)$&$q$ odd\\
		$E_6^{\epsilon}(q)$&$C_4$&$A_3^{\delta}(q)$&$q$ odd, $\delta=\pm$\\
		$F_4(q)$&$D_4(q)$&$G_2(q)$&$q>2$\\
		$F_4(q)$&$D_4(q)$&$A_3^{\epsilon}(q)$&$\epsilon=\pm$, $(q,\epsilon)\neq (2,-)$\\
		$F_4(q)$&$^3D_4(q)$&$G_2(q)$&$q>2$\\
		$F_4(q)$&$^3D_4(q)$&$A_2^{\epsilon}(q)$&$\epsilon=\pm$\\
		
		\hline
	\end{longtabu}
\end{footnotesize}
\begin{remark}
	\rm { Note that $H_0$ denotes a normal subgroup of very small index in $H$ for notational convenience. For the precise structures of $H$ and $K$ in Table \ref{table:label2}, please refer to \cite[Remark 4.2]{MR4401918}. }
\end{remark}
    The following are some helpful conclusions for simple groups of Lie type.
\begin{lemma}\label{le3}{\rm \cite[1.6]{MR340446}}
	{\rm (Tits Lemma)} If $X$ is a simple group of Lie type in characteristic $p$, then any proper subgroup of index prime to p is contained in a proper parabolic subgroup of $X$.
\end{lemma}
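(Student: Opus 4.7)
The plan is to exploit two structural facts about a finite simple group $X$ of Lie type in characteristic $p$: firstly, every Sylow $p$-subgroup of $X$ coincides with the unipotent radical $U$ of some Borel subgroup $B=UT$; and secondly, by the BN-pair axioms and the Bruhat decomposition, the proper overgroups of $B$ in $X$ are exactly the proper standard parabolic subgroups. With these two facts in hand, the lemma reduces to exhibiting a proper parabolic subgroup that contains the given subgroup $H$.

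First I would translate the index hypothesis into Sylow form: since $[X:H]$ is coprime to $p$, Sylow's theorem forces a Sylow $p$-subgroup of $X$ to lie inside $H$, and after $X$-conjugation this Sylow may be taken to be the unipotent radical $U$ of a fixed Borel $B$. In particular $U$ is simultaneously a Sylow $p$-subgroup of $H$ and of $X$. Next I would analyse the $H$-action on the coset space $X/B$, whose cardinality is coprime to $p$ because $B$ contains a Sylow $p$-subgroup. The $H$-orbit of the trivial coset $B$ has length $[H:H\cap B]$ which divides $[H:U]$, hence is coprime to $p$; more generally the orbit-sum decomposition $[X:B]=\sum_g [H:H\cap gBg^{-1}]$ together with $p\nmid [X:B]$ and $p\nmid [H]$ forces each intersection $H\cap gBg^{-1}$ appearing in a short orbit to contain a full Sylow $p$-subgroup of $X$, i.e.\ a conjugate of $U$.

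The main step — and the principal obstacle — is to promote this Sylow-containment information into the genuine containment $H\le P$ for a proper parabolic $P$. The key tool is the Borel--Tits theorem, which asserts that in a finite group of Lie type the normaliser in $X$ of any non-trivial unipotent subgroup is contained in a proper parabolic subgroup. Applied to the unipotent radical that sits inside $H$, combined with the BN-pair classification of overgroups of Borel subgroups, this pins $H$ inside a standard parabolic $P=P_J$ for some proper subset $J$ of simple reflections, and in particular $P\ne X$ because $H<X$. The technical weight of the whole argument is packaged into Borel--Tits itself, whose proof depends on the detailed root-subgroup and commutator structure of $X$; for this I would refer to the source cited for Lemma~\ref{le3}, and merely verify that the conclusion is consistent with the properness of $H$ (the simplicity of $X$ entering only to exclude the vacuous case $H=X$).
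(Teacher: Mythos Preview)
The paper does not prove this lemma: it is simply quoted from \cite[1.6]{MR340446} and used as a black box throughout. So there is nothing in the paper to compare your sketch against, and your proposal must be assessed on its own merits.

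Your reduction is correct up to the point where you obtain $U\le H$ for a Sylow $p$-subgroup $U=O_p(B)$. The gap is in the final paragraph. The Borel--Tits theorem says that $N_X(V)$ lies in a proper parabolic for any non-trivial unipotent subgroup $V$; to conclude $H\le P$ from it you need $H\le N_X(V)$ for some such $V$, i.e.\ you need $O_p(H)\ne 1$. Applying Borel--Tits with $V=U$, as you do, yields only $N_X(U)=B$, and $H$ is certainly not contained in $B$ in general (take $H$ to be any proper parabolic strictly containing $B$: it contains $U$ but does not normalise $U$). Your orbit-counting on $X/B$ establishes only that some $H$-orbit has $p'$-length, which re-derives $U\le H$ and nothing more.

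In fact the two statements ``$H$ lies in a proper parabolic'' and ``$O_p(H)\ne 1$'' are equivalent here (one direction is Borel--Tits; for the other, if $H\le P$ with $P$ proper then $1\ne O_p(P)\trianglelefteq P$ gives $O_p(P)\le O_p(H)$), so proving $O_p(H)\ne 1$ is not a shortcut but the entire content of the lemma. Seitz's argument in \cite{MR340446} is a direct BN-pair/Bruhat-decomposition computation (equivalently, a fixed-point argument on the building) rather than an appeal to Borel--Tits; your sketch gestures at the right structural facts but does not supply the combinatorial step that actually pins $H$ inside a parabolic.
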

\begin{corollary}\label{le4}
	Let $\D$ be a block-transitive $t$-$(v,k,\lam)$ design with $v=k^2$, $G\leq \Aut(\D)$, $x\in \P$, and $X=\Soc(G)$ be a simple group of Lie type in characteristic $p$. If the point stabilizer $G_x$ is not a parabolic subgroup of $G$, then $(p,\ k+1)=1$ and $|G|<|G_x||G_x|^2_{p^{\prime}}$.
\end{corollary}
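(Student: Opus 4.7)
The plan is to prove the two conclusions separately. The coprimality assertion follows from the Tits Lemma by contrapositive; the order bound is a short divisibility computation building on Corollary~\ref{le2}.

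For the first assertion, suppose for contradiction that $p\mid k+1$. Since $\gcd(k,k+1)=1$, we have $p\nmid k$, hence $p\nmid v=k^2$. By the Guan--Zhou result quoted in the introduction, block-transitivity forces $G$ to be point-primitive, so $G_x$ is maximal in $G$; moreover we may assume $G_x\not\supseteq X$, since otherwise $X$ acts trivially on $\P$ and $G$ acts through the small quotient $G/X\leq \Out(X)$, which is quickly ruled out. Applying Lemma~\ref{lemax} yields $G=G_xX$, so $|X:X\cap G_x|=|G:G_x|=v$ is coprime to $p$. The Tits Lemma (Lemma~\ref{le3}), applied to the proper subgroup $X\cap G_x$ of $X$, produces a proper parabolic $P$ of $X$ with $X\cap G_x\leq P$. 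Then the standard theory of maximal subgroups of almost simple groups of Lie type implies that $G_x$ itself is a parabolic subgroup of $G$, contradicting the hypothesis. Hence $(p,k+1)=1$.

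For the order bound, Corollary~\ref{le2} gives $k+1\mid |G_x|$. Combined with $(p,k+1)=1$ just established, we get $k+1\mid |G_x|_{p'}$, so $k<|G_x|_{p'}$ and therefore $v=k^2<|G_x|_{p'}^2$. Since $|G|=v\,|G_x|$ by point-transitivity, this gives $|G|<|G_x|\cdot|G_x|_{p'}^2$, as required.

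The one genuinely delicate step is the passage from ``$X\cap G_x$ lies in a proper parabolic of $X$'' to ``$G_x$ is a parabolic of $G$''. This uses that $X\cap G_x\trianglelefteq G_x$ (since $X\trianglelefteq G$), so $G_x$ permutes the parabolics of $X$ containing $X\cap G_x$; selecting $P$ from this $G_x$-invariant finite set in a canonical way (for instance, a minimal element, or the intersection-closed subgroup generated by the orbit) gives $G_x\leq N_G(P)$, and the maximality of $G_x$ forces equality with this parabolic of $G$. Once this point is accepted, everything else is immediate.
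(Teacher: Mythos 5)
Your proof is correct and follows essentially the same route as the paper: the Tits Lemma (Lemma~\ref{le3}) forces $p\mid v$ (you argue the contrapositive via $p\mid k+1\Rightarrow p\nmid k^2$, the paper via $k+1\mid v-1$), and the bound then comes from $k+1\mid |G_x|_{p'}$ exactly as in Corollary~\ref{le1}. The only difference is that you spell out the passage from ``$X\cap G_x$ lies in a proper parabolic'' to ``$G_x$ is parabolic in $G$,'' which the paper leaves implicit; your sketch of that step is sound, since $X\cap G_x$ contains a Sylow $p$-subgroup of $X$, so the parabolics containing it all contain a common Borel and admit a unique minimal member normalized by $G_x$.
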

\begin{proof}
    By Lemma \ref{le3}, we have $p\mid v$, otherwise, $(p,|G:G_x|)=1$, which implies that $X_x$ is contained in a parabolic subgroup of $X$ for $|G:G_x|=|X:X_x|$. Then $(p,\ v-1)=1$. Since $k+1\mid v-1$, then $(k+1,\ p)=1$. Thus $k+1\mid |G_x|_{p'}$ for $k+1\mid |G_x|$. Similar to the proof of Corollary \ref{le1}, we can obtain $|G|<|G_x||G_x|^2_{p'}$.
\end{proof}
\begin{lemma}\label{le5}{\rm \cite{MR907231}}
	If $X$ is a simple group of Lie type in characteristic $p$, acting on the set of cosets of a maximal parabolic subgroup, and $X$ is not $L_{d}(q)$, $P\Omega^+_{2m}(q)$ (with $m$ odd) and $E_6(q)$, then there is a unique subdegree which is a power of $p$.
\end{lemma}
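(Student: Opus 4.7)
The plan is to pass through the Bruhat decomposition to parameterize the subdegrees of $X$ on the coset space $X/P$ explicitly, identify algebraically which of them are powers of the defining characteristic $p$, and then verify the resulting count Dynkin-diagram by Dynkin-diagram.

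Let $P=P_J$ be the standard maximal parabolic of $X$ corresponding to $J=S\setminus\{\alpha\}$, where $S$ is the set of simple reflections of the Weyl group $W$ and $\alpha$ is the omitted simple root; let $B$ denote the Borel subgroup contained in $P_J$. The Bruhat decomposition gives
$$X=\bigsqcup_{w\in W_J\backslash W/W_J}P_J\,w\,P_J,$$
so the $P$-orbits on $X/P$ are indexed by the double cosets $W_J\backslash W/W_J$. Using $|BvB|=q^{\ell(v)}|B|$ together with the decomposition $W_JwW_J=\bigsqcup_i w'_iW_J$ into right $W_J$-cosets with $w'_i$ chosen as minimum-length representatives, so that $\ell(w'_iu)=\ell(w'_i)+\ell(u)$ for all $u\in W_J$, the subdegree corresponding to the double coset of $w$ simplifies to
$$\frac{|P_J\,w\,P_J|}{|P_J|}=\sum_i q^{\ell(w'_i)}.$$

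The next step is to observe that this polynomial expression in $q$ is a power of $p$ (equivalently a single power of $q$) precisely when the sum has a single term, since distinct powers of $q$ contribute distinct monomials. That in turn is equivalent to $W_JwW_J$ being a single right $W_J$-coset, which is equivalent to $w\in N_W(W_J)$. Consequently, the subdegrees of $X$ on $X/P$ that are powers of $p$ are in bijection with the normalizer quotient $N_W(W_J)/W_J$, and the lemma reduces to the purely Weyl-group-theoretic claim that $|N_W(W_J)/W_J|=2$ for every maximal standard parabolic subgroup $W_J$ of $W$, unless $X$ lies in the excluded list (so that the unique nontrivial element of this quotient yields the unique nontrivial power-of-$p$ subdegree).

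I would then verify this count diagram by diagram using the longest element $w_0\in W$ and its action $\alpha\mapsto -w_0(\alpha)$ on the Dynkin diagram. For the types $B_n,C_n,F_4,G_2,E_7,E_8$ and for $D_m$ with $m$ even, one has $-w_0=\mathrm{id}$ on simple roots, so $w_0$ normalizes every standard parabolic and provides the nontrivial element of $N_W(W_J)/W_J$; the standard description of $N_W(W_J)/W_J$ as the group of graph automorphisms of $J$ realizable inside $W$ bounds the quotient above by $2$ in these cases, yielding equality. The excluded types are exactly those in which $-w_0$ acts nontrivially on the Dynkin diagram and moves some node $\alpha$ to a different node: for $A_{d-1}$ the map $-w_0$ reverses the diagram, so every parabolic $P_\alpha$ with $\alpha$ not the central node satisfies $|N_W(W_J)/W_J|=1$ and admits no nontrivial power-of-$p$ subdegree; for $D_m$ with $m$ odd the element $-1\notin W$ and $-w_0$ swaps the two fork-endpoint nodes, so the maximal parabolics corresponding to those nodes again fail; and for $E_6$ the order-two graph symmetry induced by $-w_0$ fixes only the two nodes of the short branch, so the remaining four maximal parabolics have trivial normalizer quotient and break the uniqueness claim. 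For twisted types $^2A_n,\,^2D_n,\,^2E_6,\,^3D_4,\,^2B_2,\,^2F_4,\,^2G_2$ one reruns the argument inside the Frobenius-fixed Weyl group, and checks that no further exceptions arise. The main obstacle is completing this case analysis uniformly: one must not only exhibit the distinguished element $w_0$ in each non-exceptional case, but also rule out any further normalizing elements (which amounts to invoking the classification of graph automorphisms of sub-diagrams of $J$), and for twisted groups one has to be careful in translating maximal parabolics into Frobenius-orbits of simple roots before the normalizer count can be applied.
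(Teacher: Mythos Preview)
The paper does not prove this lemma at all; it merely quotes it from Liebeck--Saxl--Seitz (reference \cite{MR907231}), so there is no in-paper argument to compare against. Your Bruhat-decomposition approach is the standard one and is essentially correct, but two steps need tightening.

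First, the claim that $\sum_i q^{\ell(w'_i)}$ is a $p$-power only when the sum has a single term is justified by the wrong reason: nothing prevents several of the $w'_i$ from having equal length, so the ``distinct monomials'' remark is not enough. The clean argument is that among the minimal-length representatives there is exactly one of minimal length (namely the distinguished double-coset representative $d$), so the sum factors as $q^{\ell(d)}\bigl(1+\text{positive powers of }q\bigr)$; the bracketed factor is $\equiv 1\pmod p$ and hence the whole expression is a $p$-power iff that factor equals $1$, i.e.\ iff $d$ is the only representative, i.e.\ iff $d\in N_W(W_J)$.

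Second, and more seriously, your upper bound $|N_W(W_J)/W_J|\le 2$ is argued via ``the standard description of $N_W(W_J)/W_J$ as the group of graph automorphisms of $J$ realizable inside $W$''. This description is not correct: the natural map $N_W(W_J)/W_J\to\mathrm{Sym}(\Pi_J)$ is not injective in general (for instance in $D_4$ with the central node removed, the nontrivial coset is represented by an element fixing $\Pi_J$ pointwise), and in any case $\mathrm{Aut}$ of a sub-diagram such as $A_1^3$ can have order $6$, not $2$. The right argument for maximal $J=S\setminus\{\alpha\}$ is that $\mathrm{Stab}_W(\omega_\alpha)=W_J$ (a standard chamber-geometry fact), so $N_W(W_J)/W_J$ acts faithfully on the one-dimensional line $\mathbb{R}\omega_\alpha=V^{W_J}$ and therefore embeds in $\{\pm 1\}$; equality $|N_W(W_J)/W_J|=2$ then holds precisely when $-\omega_\alpha\in W\cdot\omega_\alpha$, i.e.\ when $-w_0(\alpha)=\alpha$. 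With this correction your case analysis for untwisted types goes through exactly as you describe, and the twisted cases (whose relative Weyl groups are of types $B$, $C$, $F_4$, $G_2$, or dihedral, all with $w_0=-1$) follow the same pattern once one notes that the subdegree formula still has leading term $1$ modulo $p$.
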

\begin{remark}\label{le5re}
	\rm The proof of {\cite[Lemma 2.6]{MR1940339} implies that for an almost simple group $G$ with socle $X=E_6(q)$, if $G$ contains a graph automorphism or the maximal parabolic subgroups $H=P_i$ with one of $2$ and $4$, then the conclusion of Lemma \ref{le5} is still applies (refer to \cite{MR4401918}). }  
\end{remark}
\section{Proof of Theorem}
	In this section, we will prove Theorem \ref{theorem1} by a series of lemmas. Throughout of the rest of the paper, we assume the following:
	
    \textbf{Hypothesis:} Let $\D=(\P,\B)$ be a $t$-$(v,k,\lam)$ design with $v=k^2$, $G\leq \Aut(\D)$ be block-transitive, $x\in \P$, and $X=\Soc(G)$ be a finite simple exceptional group of Lie type.

    Recall that $G$ is point-primitive, then the point stabilizer $G_x$ is maximal in $G$ not containing $X$. Then $G_x$ is either parabolic or one of the subgroups listed in Table \ref{table:label1} according to Corollary \ref{le1} and Lemma \ref{le6}. Note that $v=|G|/|G_x|=|X|/|G_x\cap X|$, and $|G_x|=f|G_x\cap X|$ if $|G|=f|X|$, where $f\mid |\Out(X)|$.
We first consider the parabolic cases through the Lemma \ref{parabolic}. The definition and structure of parabolic subgroup are detailed in \cite[Definition 2.6.4 and Theorem 2.6.5]{MR1490581}, and the notation $P_i$ denotes a standard maximal parabolic subgroup corresponding to deleting the $i$-th node in the Dynkin diagram of $X$. 

\begin{lemma}\label{parabolic}
	The group $G_x\cap X$ is not a parabolic subgroup of $X$.
\end{lemma}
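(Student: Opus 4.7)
The plan is to argue by contradiction. Suppose that $H_0 := G_x \cap X$ is a maximal parabolic subgroup $P_i$ of $X$. Since $P_i$ contains a full Sylow $p$-subgroup of $X$, the index $v = |X : P_i|$ is coprime to $p$, and in particular $\gcd(k,p) = 1$.

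The first step is to force a sharp arithmetic constraint on $k$. By Lemma \ref{le5}, the action of $G$ on $\P$ admits a subdegree that is a power of $p$, say $p^a$, and this conclusion is immediate whenever $X \neq E_6(q)$; when $X = E_6(q)$, Remark \ref{le5re} supplies the same conclusion provided that $G$ contains a graph automorphism of $X$ or $P_i \in \{P_2, P_4\}$. Combined with Lemma \ref{le0}, we get $(k+1) \mid p^a$, so $k+1 = p^b$ for some integer $b \geq 1$. Since $v = k^2$, this yields the Diophantine equation
\[
|X : P_i| \; = \; (p^b - 1)^2.
\]

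The bulk of the argument is a finite case-check of this equation as $(X, P_i)$ ranges over the exceptional simple socles $X \in \{{}^2B_2(q), {}^2G_2(q), {}^3D_4(q), {}^2F_4(q), G_2(q), F_4(q), E_6(q), {}^2E_6(q), E_7(q), E_8(q)\}$ and $P_i$ ranges over the standard parabolics attached to the nodes of the (twisted) Dynkin diagram. For each pair, $|X:P_i|$ is a well-known polynomial in $q = p^f$ of degree $d$ with an explicit cyclotomic factorization. Matching leading $q$-terms in the equation above forces $p^b$ to be close to $q^{d/2}$; comparing the subleading terms then either gives a direct contradiction, or one extracts a cyclotomic factor $\Phi_n(q)$ of $|X:P_i|$ that cannot divide $(p^b - 1)^2$ (typically because the multiplicative order of $p^b$ modulo $\Phi_n(q)$ is incompatible with the required factorization as a square). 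Finitely many small values of $q$ that evade these generic estimates are ruled out by direct computation using the XGCD-based Magma procedure described in Remark \ref{com}.

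The main obstacle is the residual $E_6(q)$ configuration in which $G$ contains no graph automorphism of $X$ and $i \in \{1,3,5,6\}$, where Lemma \ref{le5} is not directly applicable. Here the Dynkin symmetry $P_1 \leftrightarrow P_6$, $P_3 \leftrightarrow P_5$ reduces the work to $i \in \{1,3\}$; for each of these two parabolics one writes $|E_6(q):P_i|$ as a product of cyclotomic values $\Phi_n(q)$ and combines this with the divisibility $k+1 \mid (|G_x|,\,v-1)$ from Corollary \ref{le2}, together with $\gcd(k+1,q)=1$ from Step one, to exclude the equation $|E_6(q):P_i| = (p^b - 1)^2$ directly. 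This closes the last gap and yields the desired contradiction.
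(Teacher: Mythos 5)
Your overall strategy coincides with the paper's: invoke Lemma \ref{le5} (with Remark \ref{le5re}) to obtain a subdegree that is a power of $p$, deduce $k+1=p^b$ via Lemma \ref{le0}, and treat the $E_6(q)$ parabolics $P_1,P_3$ (and their images $P_6,P_5$ under the graph symmetry) separately. In the generic case, however, your proposed resolution of $|X:P_i|=(p^b-1)^2$ by matching leading and subleading cyclotomic terms is far heavier than necessary and is only sketched. The missing observation is that $k+1$ also divides $v-1=(k-1)(k+1)$, so $p^b=k+1$ divides $(v-1)_p$; these $p$-parts are tabulated in the reference the paper cites and are tiny compared with $\sqrt{v}$ (typically $q$ or $q^2$ against $v$ of degree at least $10$ in $q$), so $v=k^2<(v-1)_p^2$ fails at once. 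The only parabolics for which $(v-1)_p$ is not small are the Borel subgroups of ${}^2B_2(q)$ and ${}^2G_2(q)$, where $v-1$ equals $q^2$, resp.\ $q^3$; there one genuinely needs the exact Diophantine equations $(2^m-1)^2=q^2+1$ and $(3^m-1)^2=q^3+1$, which your framing does cover.

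The genuine problem is in your residual $E_6(q)$ paragraph. First, ``$\gcd(k+1,q)=1$ from Step one'' is not something you established: Step one gives $p\nmid v$, hence $p\nmid k$, which says nothing about $k+1$; indeed $p$ divides $v-1=(k-1)(k+1)$, and in the generic branch $k+1$ is itself a power of $p$. In the paper's computations for these very cases the relevant bounds do carry a factor of $q$ (for $P_1$ one gets $k+1\mid q(q^4+1)$; for $P_3$ one has $(v-1)_q=q$), so assuming $k+1$ coprime to $q$ is unjustified. Second, in this residual branch Lemma \ref{le5} is unavailable, so $k+1$ need not be a power of $p$, and the equation you propose to exclude, $|E_6(q):P_i|=(p^b-1)^2$, is not the one that must be excluded; you must rule out $v=k^2$ subject only to $k+1\mid(|G_x|,v-1)$, or to $k+1$ dividing a gcd of subdegrees. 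The paper closes this case with concrete inputs you do not supply: for $P_1$, the two explicit nontrivial subdegrees from Saxl, whose gcd divides $q(q^4+1)$; for $P_3$, a direct computation bounding $(v-1,|G_x|)$. Without such inputs your final paragraph does not close the argument.
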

\begin{proof}
If $X={^2B_2(q)}$ with $q=2^{2e+1}$, then $G_x \cap X=[q^2]:Z_{q-1}$. Thus $|G_x \cap X|=q^{2}(q-1)$ and $v=|G:G_x|=q^{2}+1$. We get $k+1 \mid q^{2}$ for $k+1 \mid v-1$, and then there exists a positive integer $m$ such that $k=2^m-1$, this implies that $(2^m-1)^2=2^{2(2e+1)}+1$, which is impossible.

If $X={^2G_2(q)}$ with $q=3^{2e+1}$, then $G_x \cap X=[q^3]:Z_{q-1}$. Thus $|G_x \cap X|=q^3(q-1)$ and $v=q^3+1$. We get $k+1\mid q^3$ for $k+1 \mid v-1$, and then there exists a positive integer $m$ such that $k=3^{m}-1$, this implies that $(3^{m}-1)^2=3^{3(2e+1)}+1$, which is impossible.

If $X=E_6(q)$ with $q=p^e$, then $|G_x|=f|G_x\cap X|$, where $f\mid 2de$ and $d=(3,q-1)$. Suppose $G_x \cap X=P_1$ or $P_3$. If $G_x \cap X=P_1$, by \cite{MR1940339}, we have $$v=(q^{12}-1)(q^9-1)/(q^4-1)(q-1),$$ and the non-trivial subdegrees are $$n_1=q(q^7+q^6+q^5+q^4+q^3+q^2+q+1)(q^3+1),$$
and
$$n_2=q^8(q^4+q^3+q^2+q+1)(q^4+1).$$ Then, $(n_1,\ n_2)\mid q(q^4+1)$. Hence, $k+1\mid q(q^4+1)$ by Lemma \ref{le0}. Then we get  
$$(q^{12}-1)(q^9-1)<q^2(q^4+1)^2(q^4-1)(q-1)$$
for $v=k^2$, a contradiction.
If $G_x \cap X=P_3$, by \cite{MR1940339}, we have $$|G_x \cap X|=\frac{1}{d} q^{36}(q-1)^{6}(q+1)^3(q^2+1)(q^2+q+1)(q^4+q^3+q^2+q+1),$$
and  $$v=(q^3+1)(q^4+1)(q^6+1)(q^4+q^2+1)(q^8+q^7+q^6+q^5+q^4+q^3+q^2+q+1).$$
Clearly, $|v-1|_q=q$, $(v-1,(q+1)(q^2+1)(q^2+q+1))=1$, and $q^4+q^3+q^2+q+1\mid v-1$.
Thus, $(v-1,|G_x|)\mid fq(q-1)^6(q^4+q^3+q^2+q+1)$. From Corollary \ref{le2}, it follows that $k+1$ divides $fq(q-1)^6(q^4+q^3+q^2+q+1)$. Thus we get $$v=k^2<(2de)^2q^2(q-1)^{12}(q^4+q^3+q^2+q+1)^2,$$ which is impossible.

If $(X,G_x\cap X)\neq (E_6(q),P_i)$ with $i=1$ and $3$, and $X\neq {^2B_2(q)}$, $^2G_2(q)$. According to Lemma \ref{le5} and Remark \ref{le5re}, there is a unique subdegree which is a power of $p$. Thus, $k+1\mid |v-1|_{p}$. The values of $|v-1|_{p}$ for each case have been determined in \cite[Table 5]{2017arXiv170201257H}. 
Then we can obtain the upper bounds of all $k$ for $k<|v-1|_{p}$, which is too small to satisfy $v=k^2$.
\end{proof}
\medskip
Next, we deal with some numerical cases by Lemma \ref{easy}.

\begin{lemma}\label{easy}
	If $X$ and $G_x\cap X$ are as in Table {\rm{\ref{table:label4}}}, then the hypothesis does not hold.
\end{lemma}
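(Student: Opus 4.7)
The plan is to handle the entries of Table \ref{table:label4} one by one. For each pair $(X, G_x\cap X)$ in the table, the index $v=|X:G_x\cap X|$ is an explicit polynomial in $q$ (or, when $q$ is fixed, an explicit integer), and likewise $|G_x|$ is known up to the factor $f\mid |\Out(X)|$. The program is therefore numerical: for each row, first test whether $v$ can equal $k^2$ for some integer $k>1$, and, if so, test whether the divisibility constraint $k+1\mid(|G_x|,v-1)$ from Corollary \ref{le2} is compatible with $v=k^2$.

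Concretely, I would proceed in three stages. First, write $v-1$ and $|G_x|$ as polynomials in $q$ and compute their greatest common divisor using the Magma \texttt{XGCD} trick recorded in Remark \ref{com}; this gives a tight upper bound on $k+1$ and hence on $k$. In most rows this bound is so much smaller than $\sqrt{v}$ that $v=k^2$ is immediately violated. Second, for those rows in which the gcd bound alone is not tight enough, I would factor $v$ itself and check arithmetic obstructions to being a perfect square (for instance, a cyclotomic factor $\Phi_m(q)$ occurring to an odd power for infinitely many $q$). Third, when the socle is very small (typical for the $q=2,3$ entries inherited from Table \ref{table:label1}), I would simply plug in the numbers: compute $v$, list the square divisors of the relevant shape, and test each candidate $k$ against $k+1\mid(|G_x|,v-1)$ directly.

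For the handful of rows appearing in Table \ref{table:label2}, the gcd test can be sharpened with Lemma \ref{le7}: any nontrivial subdegree divides $|H:K|$ for the pair $(H,K)$ listed there, so by Lemma \ref{le0} we obtain $k+1\mid |H:K|$, which is typically a much stronger divisibility than $k+1\mid(|G_x|,v-1)$. Combining this with the order bound $|G|\le(|G_x|-1)^2|G_x|$ from Corollary \ref{le1} normally closes those rows at once.

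The main obstacle I anticipate is not the generic-$q$ entries, which the gcd-and-square-root argument kills uniformly, but the small sporadic rows in Table \ref{table:label1} with $q\in\{2,3\}$ and $X$ one of $F_4(q)$, $E_6^{\epsilon}(q)$, $E_7(q)$: there the orders $|X|$ and $|G_x\cap X|$ share many prime factors, so $\gcd(|G_x|,v-1)$ is relatively large and several candidate values of $k$ survive the first filter. For these I expect to have to combine the exact factorization of $v$ (to force $v$ to be a square), the sharper subdegree data from Lemma \ref{le7}, and the structure constants of $|\Out(X)|$ (via the factor $f$ in $|G_x|=f|G_x\cap X|$) to eliminate each candidate individually — essentially a finite case analysis aided by a short Magma computation.
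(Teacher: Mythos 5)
Your proposal is correct in substance, but it is built for a harder problem than the one Lemma \ref{easy} actually poses. Every row of Table \ref{table:label4} is a fixed numerical case: $q$ is a specific small integer and $v$ is listed as an explicit integer in the last column, so there are no ``generic-$q$ entries'' here, and the polynomial \texttt{XGCD} computations, cyclotomic-factor arguments, and the subdegree data of Lemma \ref{le7} are all unnecessary for this lemma (the paper does use those tools, but in Lemmas \ref{subfield} and \ref{^{3}D_{4}(q)}--\ref{E_{8}(q)}, not here). The paper's proof is exactly your third stage applied to every row: one checks that $v$ fails to be a perfect square in all cases except $(X,G_x\cap X)=(G_2(4),A_1(13))$, where $v=230400=480^2$; there $k=480$, and $k+1=481=13\cdot 37$ does not divide $(|G_x|,v-1)$ because $37\nmid |G_x|\le 2\cdot|A_1(13)|=2184$, contradicting Corollary \ref{le2}. (Your formulation of the final test as $k+1\mid(|G_x|,v-1)$ is in fact the correct one to use --- the condition $k+1\mid v-1$ alone is automatic when $v=k^2$, since $v-1=(k+1)(k-1)$, so the divisor $|G_x|$ is what does the work.) In short: your plan closes the lemma, but the decisive filter is simply ``$v$ is not a square,'' plus one application of Corollary \ref{le2}; the rest of your machinery buys nothing for these rows.
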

\begin{footnotesize}
	\begin{longtabu}{lllll}
		\caption{Parameter $v$ in some numerical cases}
		\label{table:label4}\\
		\hline
		$X$&$G_x\cap X$&$|G_x\cap X|$&$v$\\
		\hline
		\endfirsthead
		\hline
		$X$&$G_x\cap X$&$|G_x\cap X|$&$v$\\
		\hline
		\endhead
		\hline
		\endfoot
		
		$^2B_2(8)$&$13:4$&$52$&$560$\\
		$^2B_2(32)$&$41:4$&$164$&$198400$\\
		
		$^3D_4(2)$&$7^2:SL_2(3)$&$1176$&$179712$\\
		
		$^2F_4(8)$&$SU_3(8):2$&$33094656$&$8004475184742400$\\
		&$PGU_3(8):2$&$33094656$&$8004475184742400$\\
		
		$G_2(3)$&$2^3\cdot A_2(2)$&$1344$&$3159$ \\
		$G_2(4)$&$A_1(13)$&$1092$&$230400$\\
		&$J_2$&$604800$&$416$\\
		$G_2(5)$&$2^3\cdot A_2(2)$&$1344$&$4359375$ \\
		&$G_2(2)$&$12096$&$484375$\\
		$G_2(7)$&$G_2(2)$&$12096$&$54925276$\\
		$G_2(11)$&$J_1$&$175560$&$2145199320$\\
		
		$F_4(2)$&$Alt_9$&$181440$&$18249154560$ \\
		&$Alt_{10}$&$1814400$&$1824915456$ \\
		&$A_3(3)\cdot 2$&$12130560$&$272957440$ \\
		&$J_2$&$604800$&$5474746368$ \\
		& $(Sym_6\wr Sym_2)\cdot 2$&$2880$&$1149696737280$ \\
		$F_4(3)$&$^3D_4(2)$&$211341312$&$27133458851701800$ \\
		
		$E_6^{-}(2)$&$J_3$&$50232960$&$1523551064555520$ \\
		&$Alt_{12}$&$239500800$&$319549996007424$ \\
		&$B_3(3):2$&$9170703360$&$8345322782720$ \\
		&$Fi_{22}$&$64561751654400$&$1185415168$ \\
		
		$E_7(2)$&$Fi_{22}$&$64561751654400$&$123873281581429293827751936$ \\
		\hline
	\end{longtabu}
\end{footnotesize}
\begin{proof}
    If $X=G_2(4)$ and $G_x\cap X=A_1(13)$, then $k=480$ for $v=230400$, which does not hold for $k+1\mid v-1$. For other cases in Table \ref{table:label4}, the values of $v$ cannot be square-rooted, which can be ruled out.
\end{proof}
\medskip
The following lemma deals with the subfield cases. The definition of subfield subgroup is detailed in \cite[Definition 2.2.11]{MR3098485}. Note that the subfield subgroups of $E_6(q)$ are $E_6(q_0)$ if $q=q_0^r$ with $r$ prime, and also ${^2E_6(q^{1/2})}$ if $q$ is a square (refer to \cite[p. 638]{MR4651009}).
\begin{lemma}\label{subfield}
	The group $G_x \cap X$ is not a subfield subgroup of $X$.
\end{lemma}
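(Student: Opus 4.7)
The plan is to go through the subfield-subgroup entries of Table~\ref{table:label1} one Lie type at a time. In each case $H\cap X = X_0(q_0)$, where $q=q_0^r$ with $r\in\{2,3\}$ prime and $X_0$ is either the same Lie type as $X$ or a twisted variant (e.g.\ ${}^{2}E_6(q^{1/2})\le E_6(q)$). The index $v=|X|/|X\cap G_x|$ is an explicit rational expression in $q_0$ obtained from the Lie-type order formula, and its $p$-part equals $q_0^{(r-1)N}$, where $N$ is the number of positive roots of $X$ and $p$ is the defining characteristic. Since Lemma~\ref{parabolic} already rules out parabolic stabilizers, Corollary~\ref{le4} applies and gives the refined bound $|G|<|G_x|\cdot|G_x|_{p'}^2$ together with $(p,k+1)=1$. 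The task is to show that this bound, or the square condition $v=k^2$, fails in each subfield configuration.

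For the cubic case $r=3$, the plan is to use Corollary~\ref{le4} directly. Writing $d=\dim X$, we have $|X_0(q_0)|_{p'}\le q_0^{d-N}$, so
\[
\tfrac{1}{|\Out(X)|}|X(q)|\;\le\;|G|\;<\;|\Out(X)|^3\cdot|X_0(q_0)|\cdot q_0^{2(d-N)}.
\]
With $|X(q)|\gtrsim q_0^{3d}$ and $|X_0(q_0)|\lesssim q_0^{d}$, this collapses to $q_0^{2N}\lesssim|\Out(X)|^3$, which bounds $q_0$ by a small explicit constant for each Lie type. The finitely many residual candidates are disposed of by computing $v$ and checking either that $v$ is not a perfect square or that $k+1\mid(|G_x|,v-1)$ fails, as required by Corollary~\ref{le2}.

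For the square case $r=2$, the above dimension bound reads $q_0^{2d}\lesssim q_0^{3d-2N}=q_0^{d+2\ell}$ (where $\ell$ is the rank), which is not restrictive, so a different strategy is needed: argue directly that $v=k^2$ is impossible. One has an explicit product formula $v=q_0^{N}\prod_i(q_0^{d_i}\pm 1)$ coming from the ratio of the cyclotomic pieces of $|X(q_0^2)|$ and $|X_0(q_0)|$, with leading monomial $q_0^{d}$. The idea is a between-squares argument: sandwich $\sqrt{v/q_0^{N}}$ strictly between two nearby integers of the form $q_0^{(d-N)/2}+a(q_0)$ for small polynomial corrections $a$, forcing any square value to satisfy a low-degree polynomial identity in $q_0$ with no integer solutions beyond a small threshold. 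As a template I would treat $G_2$ first, where $v=q_0^{6}(q_0^{6}+1)(q_0^{2}+1)$ satisfies $(q_0^{4})^2<v/q_0^{6}<(q_0^{4}+q_0^{2})^2$ and the only candidate squares $q_0^{4}+1$ and $q_0^{4}+q_0^{2}-1$ lead to $(q_0^{2}-1)^2=0$ and $q_0^{4}-q_0^{2}-3=0$ respectively, both impossible for $q_0\ge 2$. The same scheme applies to $F_4$, $E_6^{\pm}$, $E_7$, $E_8$; where the window between consecutive candidate squares becomes too wide, refinement via congruences modulo cyclotomic divisors $\Phi_e(q_0)$ combined with Corollary~\ref{le2} sharpens the bound on $k+1$ enough to finish.

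The main obstacle will be executing the between-squares arguments cleanly for the higher-rank groups, where $v$ is a polynomial in $q_0$ of degree up to $240$ (for $E_8$) and one must control it against several competing candidate polynomials simultaneously. A small residual list of very-small-$q_0$ configurations, such as $({}^{3}D_4(4),{}^{3}D_4(2))$, $({}^{2}F_4(8),{}^{2}F_4(2))$, $(F_4(4),F_4(2))$, and $(E_6^{\epsilon}(4),E_6^{\epsilon}(2))$, will have to be dispatched individually using the precise group orders together with Corollary~\ref{le2}; here I expect Magma (Remark~\ref{com}) to be indispensable for verifying the non-square property of $v$ and the divisibility $k+1\mid(|G_x|,v-1)$.
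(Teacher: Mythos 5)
Your architecture differs from the paper's. The paper handles every subfield case uniformly with Corollary~\ref{le2}: it runs the extended Euclidean algorithm on the polynomials $|G_x\cap X|$ and $v-1$ (Remark~\ref{com}) to get $k+1\mid c(q_0)f$ for some low-degree $c(q_0)$, so that $v=k^2<c(q_0)^2f^2$ fails for all but the few values of $q$ recorded in Table~\ref{table:label3}, whose $v$ are then checked not to be perfect squares. Your treatment of the cubic case $r=3$ via Corollary~\ref{le4} is a legitimate and arguably cleaner alternative: since $v\approx q_0^{2\dim X}$ while $|G_x|^2_{p'}\approx q_0^{2(\dim X-N)}$, the bound $v<|G_x|^2_{p'}$ is contradictory outright, with no gcd computation needed.

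The gap is in the square case $r=2$. Your between-squares scheme does not, as described, reduce to finitely many candidate square roots. Already in your $G_2$ template, the sandwich $q_0^{4}<\sqrt{v/q_0^{6}}<q_0^{4}+q_0^{2}$ leaves $q_0^{2}-1$ integer candidates $q_0^{4}+j$, not the two you test; ruling them all out needs a further step (here an easy one: $(q_0^{4}+j)^2=v/q_0^{6}$ forces $j^{2}\equiv q_0^{2}+1\pmod{q_0^{4}}$, hence $j^{2}=q_0^{2}+1$, which has no solution). That patch is cheap for $G_2$, but for $E_7$ and $E_8$ the quantity $v/q_0^{N}=\prod_i(q_0^{d_i}\pm1)$ has degree up to $128$, its square root expands as $q_0^{64}+\tfrac12q_0^{62}-\cdots$ with non-integer coefficients, and any integer-coefficient polynomial sandwich leaves on the order of $q_0^{60}$ candidates to eliminate. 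Your proposed rescue---``congruences modulo cyclotomic divisors combined with Corollary~\ref{le2}''---is precisely the paper's gcd bound on $k+1$, so at that point you are not running a different argument but an unspecified fragment of the paper's. As written, the $r=2$ half of the proof is not executable; either carry out the congruence elimination in full for each of $F_4$, $E_6^{\pm}$ (including the twisted subfield subgroup ${}^2E_6(q^{1/2})$, where the factors carry mixed signs), $E_7$, $E_8$, ${}^3D_4$, or replace it by the divisibility bound $k+1\mid(|G_x|,v-1)$ computed via polynomial gcd as in the paper.
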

\begin{footnotesize}
	\begin{longtabu}{lll}
		\caption{The possible values of $q$ in subfield cases}
		\label{table:label3}\\
		\hline
		$X$&$G_x \cap X$&Possible $q$\\
		\hline
		\endfirsthead
		\hline
		$X$&$G_x \cap X$&Possible $q$\\
		\hline
		\endhead
		\hline
		\endfoot
		
		$^2B_2(q)$&$^2B_2(q^{1/3})$&None\\
		$^2G_2(q)$&$^2G_2(q^{1/3})$&None\\
		$^3D_4(q)$&$^3D_4(q^{1/2})$&None\\
		$^2F_4(q)$&$^2F_4(q^{1/3})$&None\\
		$G_2(q)$&$G_2(q^{1/2})$&None\\
		$G_2(q)$&$G_2(q^{1/3})$&None\\
		$F_4(q)$&$F_4(q^{1/2})$&None\\
		$F_4(q)$&$F_4(q^{1/3})$&None\\
		$E_6(q)$&$E_6(q^{1/2})$&$2^2$\\
		$E_6(q)$&$E_6(q^{1/3})$&$2^3$\\
		$E_6(q)$&$^2E_6(q^{1/2})$&$2^2$\\
		$^2E_6(q)$&$^2E_6(q^{1/3})$&$2^3$\\
		$E_7(q)$&$E_7(q^{1/2})$&$i^2$ with $i\leq5$\\
		$E_7(q)$&$E_7(q^{1/3})$&None\\
		$E_8(q)$&$E_8(q^{1/2})$&$2^2$\\
		$E_8(q)$&$E_8(q^{1/3})$&None\\
		
		\hline
	\end{longtabu}
\end{footnotesize}
\begin{proof}             
    Suppose $(X,G_x\cap X)=({^2B_2(q)},{^2B_2(q_0)})$ with $q=p^e$ and $q=q_0^3$, then $|G_x|=f|G_x\cap X|$ where $f\mid e$, $G_x\cap X=q^2_0(q^2_0+1)(q_0-1)$, and $$v=\frac{q_0^{6}(q_0^{6}+1)(q_0^3-1)}{q^2_0(q^2_0+1)(q_0-1)}=q_0^4(q_0^4-q_0^2+1)(q_0^2+q_0+1).$$
    By using Magma (Remark \ref{com}), there are two integral coefficient polynomials $P(q_0)$ and $Q(q_0)$ on $q_0$, such that
    $$P(q_0)|G_x\cap X|+Q(q_0)(v-1)=20.$$
    Thus, $(|G_x|,v-1)\mid 20f$. Hence $k+1\mid 20f$. Since $v=k^2<(|G_x|,v-1)^2$, we get $$q_0^4(q_0^5+q_0^4+q_0^3+q_0^2+q_0+1)(q_0^4-q_0^2+1)<20^2f^2.$$
    As a result, it can be demonstrated that no value of $q$ satisfies the inequality $v<(|G_x|,v-1)^2$. 
    
    Using similar computations, we determine the values of $q$ in all subfield cases that satisfy the inequality $v<(|G_x|,v-1)^2$, as listed in Table \ref{table:label3}. Notably, for each $q$ value listed in Table \ref{table:label3}, the corresponding $v$ fail to be a perfect square. Thus, the conclusion is proved.
\end{proof}
\medskip
Based on Lemmas \ref{parabolic}-\ref{subfield}, we can obtain the following conclusions through checking the remaining candidates in Table \ref{table:label1}.	 	
\begin{lemma}\label{^{2}B_{2}(q)}
	The group $X$ is not $^{2}B_{2}(q)$, with $q=2^{2e+1}\geq 8$.
\end{lemma}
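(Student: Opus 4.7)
The plan is to reduce to the list of large maximal subgroups supplied by Lemma~\ref{le6} and then dismiss each remaining candidate using one of the three preparatory lemmas (\ref{parabolic}, \ref{easy}, \ref{subfield}). By Corollary~\ref{le1} the point stabilizer $G_x$ is a large maximal subgroup of $G$ not containing $X$, so Lemma~\ref{le6} forces $G_x$ to be either parabolic or one of the subgroups recorded in the row of Table~\ref{table:label1} corresponding to $X={^2B_2(q)}$. The parabolic possibility is killed immediately by Lemma~\ref{parabolic}, so I may assume that $G_x\cap X$ comes from that row of Table~\ref{table:label1}.

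The relevant row offers only two families of non-parabolic candidates:
\begin{itemize}
\item[(i)] $G_x\cap X=(q+\sqrt{2q}+1){:}4$, which occurs exclusively for $q=8$ and $q=32$;
\item[(ii)] $G_x\cap X={^2B_2(q^{1/3})}$, a subfield subgroup arising only when $q>8$ and $3\mid 2e+1$.
\end{itemize}
Case (ii) is a subfield subgroup and is therefore governed by Lemma~\ref{subfield}; the first row of Table~\ref{table:label3} records that no value of $q$ satisfies the bound $v<(|G_x|,v-1)^2$ in this situation, so (ii) is excluded.

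For case (i), the two admissible pairs are $(X,G_x\cap X)=({^2B_2(8)},\,13{:}4)$ with $v=560$ and $(X,G_x\cap X)=({^2B_2(32)},\,41{:}4)$ with $v=198400$; both appear in Table~\ref{table:label4}. Neither $560$ nor $198400$ is a perfect square, so neither can equal $k^2$, and hence Lemma~\ref{easy} disposes of both. Combining these three strands completes the proof, and there is no serious technical obstacle: the only point requiring care is confirming that Table~\ref{table:label1} is genuinely exhaustive for the large non-parabolic maximal subgroups of almost simple groups with socle $^2B_2(q)$, which is guaranteed by Lemma~\ref{le6}.
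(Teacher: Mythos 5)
Your proposal is correct and follows essentially the same route as the paper: invoke Corollary~\ref{le1} and Lemma~\ref{le6} to restrict $G_x$ to parabolic subgroups or the entries of Table~\ref{table:label1}, then eliminate the parabolic case by Lemma~\ref{parabolic}, the pairs $({^2B_2(8)},13{:}4)$ and $({^2B_2(32)},41{:}4)$ by Lemma~\ref{easy} (neither $560$ nor $198400$ is a perfect square), and the subfield case by Lemma~\ref{subfield}. No gaps.
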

\begin{proof}
    According to Lemma \ref{parabolic}, $G_x\cap X$ cannot be parabolic subgroup. And $(X,G_x\cap X)$ cannot be $({^2B_2(8)},13:4)$, $(^2B_2(32),41:4)$ or $({^2B_2(q)},{^2B_2(q^{1/3})})$ by Lemmas \ref{easy} and \ref{subfield}. Thus, ${^{2}B_{2}(q)}$ can be ruled out.
\end{proof}
\begin{lemma}
	The group $X$ is not $^{2}G_{2}(q)$, with $q=3^{2e+1}\geq27$.
\end{lemma}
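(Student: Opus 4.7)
The plan is to mirror the structure of the preceding Lemma~\ref{^{2}B_{2}(q)}. By Table~\ref{table:label1}, when $X = {^{2}G_{2}(q)}$ with $q = 3^{2e+1} \geq 27$, the large maximal subgroups of the almost simple group $G$ are: parabolic subgroups, subgroups of type $A_{1}(q)$ (involution centralizers with structure $\langle \tau \rangle \times \PSL_{2}(q)$), and the subfield subgroup ${^{2}G_{2}(q^{1/3})}$ (which requires $3 \mid 2e+1$). Lemma~\ref{parabolic} immediately excludes parabolic point stabilizers and Lemma~\ref{subfield} excludes the subfield case, so the only remaining situation to analyze is $G_{x} \cap X$ of type $A_{1}(q)$.

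In that remaining case I would compute $v$ directly. Since $v = |X|/|G_{x} \cap X|$ does not depend on the outer-automorphism factor $f$, using $|X| = q^{3}(q-1)(q+1)(q^{2}-q+1)$ together with $|G_{x} \cap X| = q(q^{2}-1)$, a direct cancellation yields
\[
v \;=\; q^{2}(q^{2}-q+1).
\]

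The main step---and really the only nontrivial point---is to show that this $v$ cannot be a perfect square, contradicting the standing hypothesis $v = k^{2}$. For $q \geq 2$ one has the elementary sandwich
\[
(q-1)^{2} \;=\; q^{2}-2q+1 \;<\; q^{2}-q+1 \;<\; q^{2},
\]
which pins $q^{2}-q+1$ strictly between two consecutive integer squares; hence $q^{2}-q+1$ is not a square, and since $q^{2}$ is, neither is $v = q^{2}(q^{2}-q+1)$. The resulting contradiction eliminates the $A_{1}(q)$ case and completes the proof. I do not anticipate any subtle obstacle here: once parabolic and subfield stabilizers are removed via the earlier lemmas, the whole argument reduces to the order calculation above and the sandwich inequality.
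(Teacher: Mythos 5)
Your proof is correct, and its reduction to the single case $G_x\cap X\cong 2\times \PSL_2(q)$ via Lemmas \ref{parabolic} and \ref{subfield}, together with the computation $v=q^2(q^2-q+1)$, agrees exactly with the paper. Where you diverge is in how the remaining case is eliminated. The paper applies Corollary \ref{le2}: it computes $(|G_x|,v-1)\mid f(q-1)$, deduces $k+1\mid f(q-1)$, and derives the contradiction $q^2(q^2-q+1)<q(q-1)^2$ from $v=k^2$ and $f<\sqrt{q}$. You instead observe that $v=q^2(q^2-q+1)$ can never be a perfect square: $q^2(q^2-q+1)=k^2$ would force $q^2-q+1$ to be a square (since $q$ is a prime power, $q^2\mid k^2$ gives $q\mid k$), whereas $(q-1)^2<q^2-q+1<q^2$ pins it strictly between consecutive squares. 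Your route is more elementary --- it uses only the hypothesis $v=k^2$ and none of the subdegree or gcd machinery --- and is in the same spirit as the paper's own Lemma \ref{easy}, where candidates are discarded precisely because $v$ fails to be a square. Both arguments are sound; the paper's has the minor advantage of following the uniform template it reuses for the other exceptional families, but yours is shorter and self-contained for this case.
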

\begin{proof}
    Suppose that $X={^{2}G_{2}(q)}$ with $q=3^{2e+1}\geq27$. Then $$|G|=f|X|=fq^3(q^3+1)(q-1),$$ where $f\mid2e+1$. It is obvious that $f<\sqrt{q}$. According to Lemmas \ref{parabolic}-\ref{subfield}, we only need to deal with the case $G_x \cap X=2\times L_{2}(q)$ with $q\geq 27$. Then $|G_x\cap X|=q(q^2-1)$ and $v=q^2(q^2-q+1)$. Thus, $(|G_x|,v-1)\mid f(q-1)$. Therefore, $k+1\mid f(q-1)$. Hence, $q^2(q^2-q+1)<q(q-1)^2$ for $v=k^2$. It implies that $q^3-2q^2+3q-1<0$, which is a contradiction.
\end{proof}
\begin{lemma}\label{^{3}D_{4}(q)}
	The group $X$ is not $^{3}D_{4}(q)$, with $q=p^{e}$.
\end{lemma}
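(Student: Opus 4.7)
The plan is to parallel the proofs of Lemmas~\ref{^{2}B_{2}(q)} through the preceding one. First, by Lemma~\ref{parabolic}, $G_{x}\cap X$ cannot be a parabolic subgroup of $X$; by Lemma~\ref{subfield}, it cannot be the subfield subgroup $^{3}D_{4}(q^{1/2})$; and by Lemma~\ref{easy}, the pair $(X, G_{x}\cap X) = ({^{3}D_{4}(2)}, 7^{2}{:}\SL_{2}(3))$ is already excluded. Consulting the $^{3}D_{4}(q)$ row of Table~\ref{table:label1}, this leaves exactly three families of maximal subgroups to handle, namely
\[
G_{x}\cap X \in \bigl\{\, A_{1}(q^{3})A_{1}(q),\ (q^{2}+\epsilon q+1)A_{2}^{\epsilon}(q),\ G_{2}(q)\,\bigr\},
\]
with $\epsilon = \pm$ in the middle case.

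For each of these three types I would write $|G_{x}\cap X|$ and $v = |X|/|G_{x}\cap X|$ as explicit polynomials in $q$, observe that $|G_{x}| = f\,|G_{x}\cap X|$ with $f$ dividing $|\Out(X)| = 3e$ (where $q = p^{e}$), and then apply the Magma \texttt{XGCD} procedure of Remark~\ref{com} to $|G_{x}\cap X|$ and $v-1$ in order to produce a low-degree polynomial $\delta(q) \in \mathbb{Z}[q]$ such that $(|G_{x}|, v-1)$ divides $f\,\delta(q)$. By Corollary~\ref{le2} this forces $k+1 \mid f\,\delta(q)$, and then the inequality $v = k^{2} \leq (f\,\delta(q))^{2}$ must fail for every admissible $q$, producing the desired contradiction.

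For the $A_{1}(q^{3})A_{1}(q)$ and $(q^{2}+\epsilon q+1)A_{2}^{\epsilon}(q)$ families, $v$ is a polynomial of degree $16$ or $18$ in $q$ while $|G_{x}\cap X|$ has degree $12$ or $10$ respectively, so $\sqrt{v}$ outstrips $f\,\delta(q)$ with plenty of room to spare and the verification is routine. The main obstacle is the $G_{2}(q)$ subcase, in which $v = q^{6}(q^{8}+q^{4}+1)$ and $|G_{x}\cap X| = q^{6}(q^{6}-1)(q^{2}-1)$ both have degree $14$ in $q$, so that $\sqrt{v}$ and $|G_{x}|$ are of comparable polynomial order. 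Here the XGCD step has to be tight: I would verify directly, by reducing $v-1 = q^{14}+q^{10}+q^{6}-1$ modulo the cyclotomic factors of $|G_{2}(q)|$, that $v-1 \equiv -1 \pmod{q^{2}\pm q+1}$ and $v-1 \equiv 2 \pmod{q\pm 1}$, so that the only serious contribution to $\gcd(|G_{x}|, v-1)$ comes from a bounded power of $2$ and from $f$. This cuts $\delta(q)$ down to a constant, after which $(f\,\delta(q))^{2} < q^{6}(q^{8}+q^{4}+1)$ holds for every $q = p^{e}\geq 2$. The residual small-$q$ values (in particular $q = 2$, where $G_{2}(2)$ is not simple and must be replaced by its derived subgroup) are disposed of by direct numerical inspection, checking that in each such case $v$ fails to be a perfect square.
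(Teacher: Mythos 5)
Your proposal is correct and follows essentially the same route as the paper: reduce to the three non-parabolic, non-subfield families from Table \ref{table:label1}, bound $k+1$ via $\gcd(|G_x|,v-1)$ using cyclotomic congruences (your residues $v-1\equiv -1 \pmod{q^2\pm q+1}$ and $v-1\equiv 2\pmod{q\pm 1}$ in the $G_2(q)$ case match the paper's conclusion $(|G_x|,v-1)\mid 2^2f$), and finish with the inequality $v=k^2<(k+1)^2$ plus a perfect-square check on leftover small $q$. The only caveat is that the torus-normalizer family $(q^2+\epsilon q+1)A_2^{\epsilon}(q)$ is not quite as roomy as you suggest: its gcd bound carries a factor $(q-\epsilon 1)$ and constants up to $2^2 3^5$, so $q=2$ survives the inequality and must be eliminated numerically, exactly as your fallback prescribes.
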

\begin{proof}
    Suppose that $X={^{3}D_{4}(q)}$ with $q=p^{e}$, where $p$ is a prime. Then $$|G|=f|X|=fq^{12}(q^8+q^4+1)(q^6-1)(q^2-1),$$ where $f\mid 3e$.  It is obvious that $f<q^{\frac 32}$. According to Lemmas \ref{parabolic}-\ref{subfield}, the remaining candidates for $G_x\cap X$ are: $A_1(q^3)\times A_1(q)$ with $q$ even, $(SL_2(q^3)\circ SL_2(q))\cdot 2$ with $q$ odd, $(Z_{q^2+\epsilon q+1}\circ SL^{\epsilon}_3(q))\cdot d\cdot 2$ with $d=(3,q^2+\epsilon q+1)$ and $\epsilon=\pm$, and $G_2(q)$.

    Case (1).\ If $G_x \cap X=A_1(q^3)\times A_1(q)$ with $q$ even or $(SL_2(q^3)\circ SL_2(q))\cdot 2$ with $q$ odd, then $$|G_x \cap X|=q^{4}(q^6-1)(q^2-1)=q^{4}(q^4+q^2+1)(q^2-1)^2,$$
    and
    $$v=q^8(q^8+q^4+1)=q^8(q^4+q^2+1)(q^4-q^2+1).$$ Clearly, $(v-1,q(q^4+q^2+1))=1$, and $v-1\equiv 2\pmod {q^2-1}$. Then, $(|G_x|,v-1)\mid 2^2f$. Thus, we get $k+1\mid 2^2f$, which implies $v=k^2<2^4f^2$, this is impossible.

    Case (2).\ If $G_x \cap X=(Z_{q^2+\epsilon q+1}\circ  SL^{\epsilon}_3(q))\cdot d\cdot 2$ with $d=(3,q^2+\epsilon q+1)$ and $\epsilon=\pm$, then $$|G_x\cap X|=2dq^{3}(q^3-\epsilon1)(q^2+\epsilon q+1)(q^2-1),$$
    and
    $$v=\frac{1}{2d}q^9(q^2-\epsilon q+1)(q^3+\epsilon1)(q^4-q^2+1).$$
    
    If $(\epsilon,d)=(\pm,1)$, then by using Magma (Remark \ref{com}), $(|G_x|,v-1)\mid 2f\cdot 2\cdot 3^5(q-\epsilon1)$. Hence, $k+1\mid 2^23^5f(q-\epsilon1)$. It implies that $v=k^2<2^43^{10}f^2(q-\epsilon1)^2$, which force $(\epsilon,q)=(+,2)$. In this case $v=2^8\cdot 3^3\cdot 13$, a contradiction.
    
    If $(\epsilon,d)=(\pm,3)$, then by using Magma (Remark \ref{com}), $(|G_x|,v-1)\mid 6f\cdot 2^5\cdot 3$. Hence, $k+1\mid 2^63^2f$. It implies that $v=k^2<2^{12}3^4f^2$, which force $(\epsilon,q)=(-,2)$. In this case $v=\frac{163072}{3}$, a contradiction.
   
    Case (3).\ If $G_x \cap X=G_2(q)$, then $$|G_x\cap X|=q^{6}(q^6-1)(q^2-1)=q^{6}(q^4+q^2+1)(q^2-1)^2,$$
    and $$v=q^6(q^4+q^2+1)(q^4-q^2+1).$$ Clearly, $(v-1,q(q^4+q^2+1))=1$, and $v-1\equiv 2\pmod{q^2-1}$. Thus, $(|G_x|,v-1)\mid 2^2f$, which is a similar contradicition.
\end{proof}
\begin{lemma}
	The group $X$ is not $^{2}F_{4}(q)$, with $q=2^{2e+1}\geq 8$.
\end{lemma}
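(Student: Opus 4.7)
The plan is to enumerate the remaining candidates for $G_x\cap X$ from Table~\ref{table:label1}. With $X={^{2}F_{4}(q)}$ and $q=2^{2e+1}\geq 8$, Lemma~\ref{parabolic} removes the parabolic subgroups, Lemma~\ref{subfield} removes $^{2}F_{4}(q^{1/3})$, Lemma~\ref{easy} removes $SU_{3}(8):2$ and $PGU_{3}(8):2$, and the $q=2$ entries ($A_{2}(3):2$, $A_{1}(25)$, $\mathrm{Alt}_{6}\cdot 2^{2}$, $5^{2}:4\mathrm{Alt}_{4}$) are excluded by hypothesis. Only two candidates remain: $G_x\cap X={^{2}B_{2}(q)}\wr 2$ (of order $2q^{4}(q-1)^{2}(q^{2}+1)^{2}$), and $G_x\cap X=B_{2}(q):2$ (of order $2q^{4}(q-1)^{2}(q+1)^{2}(q^{2}+1)$), with outer factor $f=|G:X|$ dividing $2e+1$.

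Using the factorization $|X|=q^{12}(q-1)^{2}(q+1)^{2}(q^{2}+1)^{2}(q^{2}-q+1)(q^{4}-q^{2}+1)$, I compute
\[
v_{1}=\tfrac{1}{2}q^{8}(q+1)^{2}(q^{2}-q+1)(q^{4}-q^{2}+1),\qquad v_{2}=\tfrac{1}{2}q^{8}(q^{6}+1)(q^{2}-q+1)
\]
in the two cases. I then reduce $v-1$ modulo every irreducible factor of $|G_x\cap X|$. Since $q$ is even, $v$ is even and $v\equiv 0\pmod{q}$, giving $(v-1,2q)=1$. For the first family a direct substitution yields $v_{1}\equiv 2\pmod{q-1}$ and $v_{1}\equiv 3\pmod{q^{2}+1}$, so $\gcd(v_{1}-1,|G_x\cap X|)=1$ and hence $\gcd(|G_x|,v_{1}-1)\mid f$ via the identity $\gcd(f\alpha,\beta)\mid f\gcd(\alpha,\beta)$. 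For the second family $v_{2}\equiv 1\pmod{q-1}$, so $(q-1)\mid v_{2}-1$; writing $q=1+t$ and expanding modulo $t^{2}$ gives $v_{2}-1\equiv 12(q-1)\pmod{(q-1)^{2}}$, while $v_{2}-1$ remains coprime to $q+1$ and $q^{2}+1$. Because $q-1$ is odd, this forces $\gcd(|G_x|,v_{2}-1)\mid 3f(q-1)$, which is also the bound returned by the Magma \texttt{XGCD} procedure of Remark~\ref{com}.

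Combining with Corollary~\ref{le2} and $v=k^{2}$, the first family forces $v_{1}<f^{2}\leq(2e+1)^{2}$, and the second forces $v_{2}<9f^{2}(q-1)^{2}\leq 9(2e+1)^{2}q^{2}$. Both inequalities contradict the trivial lower bound $v\geq q^{15}$, which holds uniformly for all $q=2^{2e+1}\geq 8$ since $2e+1=\log_{2}q$. Hence no valid configuration exists, and the lemma follows.

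The principal obstacle is the second family: the factor $(q-1)^{2}$ of $|G_x\cap X|$ does not fully cancel against $v_{2}-1$, so one must compute one additional Taylor coefficient of $v_{2}$ at $q=1$ to confirm that the $(q-1)$-part of the GCD is linear, rather than quadratic, in $q-1$. Tracking the small constant $12$ there (which reduces to the factor $3$ because $q-1$ is odd) is precisely what makes the final degree comparison between $v$ and $\gcd(|G_x|,v-1)^{2}$ overwhelming.
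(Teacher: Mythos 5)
Your proposal is correct and follows essentially the same route as the paper: after Lemmas \ref{parabolic}, \ref{easy} and \ref{subfield} reduce to the two candidates ${^2B_2(q)}\wr 2$ and $B_2(q){:}2$, you compute $v$, bound $(|G_x|,v-1)$ by reducing $v-1$ modulo the cyclotomic factors of $|G_x\cap X|$, and contradict $v=k^2$ via Corollary \ref{le2}. Your bounds are slightly sharper than the paper's (e.g.\ $3f(q-1)$ versus the paper's $2^3f(q-1)^2$ in the $B_2(q){:}2$ case, obtained from the second-order expansion at $q=1$), but this is a refinement rather than a different method.
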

\begin{proof}
	Suppose that $X={^{2}F_{4}(q)}$ with $q=2^{2e+1}\geq 8$. Then $$|G|=f|X|=fq^{12}(q^6+1)(q^4-1)(q^3+1)(q-1),$$ where $f\mid2e+1$. It is obvious that $f<q$. According to Lemmas \ref{parabolic}-\ref{subfield}, we only need to deal with the cases $G_x \cap X={^{2}B_2(q)}\wr2$ and $B_2(q):2$.
	
	Case (1).\ If $G_x \cap X={^{2}B_2(q)}\wr2$, then $|G_x \cap X|=2q^{4}(q^2+1)^2(q-1)^2$, $v=\frac12q^8(q^4-q^2+1)(q^3+1)(q+1)$. Clearly,
	$(v-1,q(q-1))=1$, and $v-1\equiv 4 \pmod{q^2+1}$. Thus, $k+1\mid 2^5f$ for $(|G_x|,v-1)\mid 2^5f$. Therefore, $v=k^2<2^{10}f^2<2^{10}q^2$, a contradiction.
	
	Case (2).\ If $G_x \cap X=B_2(q):2$, then $$|G_x \cap X|=2q^{4}(q^4-1)(q^2-1)=2q^{4}(q^2+1)(q+1)^2(q-1)^2,$$ and $v=\frac12q^8(q^6+1)(q^2-q+1)$. Clearly, $(v-1,q(q^2+1))=1$, $v-1\equiv 2 \pmod{q+1}$, and $q-1\mid v-1$. Thus, $k+1\mid 2^3f(q-1)^2$ for $(|G_x|,v-1)\mid 2^3f(q-1)^2$. Therefore, $v=k^2<2^6f^2(q-1)^4<2^6q^2(q-1)^4$, a contradiction.
\end{proof}
\begin{lemma}
	The group $X$ is not $G_{2}(q)$, with $q=p^{e}>2$.
\end{lemma}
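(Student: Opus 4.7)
The plan is to rule out each remaining maximal candidate for $G_x \cap X$ after applying Lemmas~\ref{parabolic},~\ref{subfield}, and~\ref{easy}. Consulting Table~\ref{table:label1}, the surviving possibilities when $X = G_2(q)$ with $q > 2$ are:
(i) $A_2^{\epsilon}(q).2 \cong SL_3^{\epsilon}(q).2$ for $\epsilon = \pm$;
(ii) the involution-centralizer type $(SL_2(q) \circ SL_2(q)).2$ of type $A_1(q)^2$;
(iii) ${}^2G_2(q)$ with $q = 3^{2a+1}$ and $a \geq 1$.
In every case I would compute $v = |X|/|G_x \cap X|$ explicitly, bound $(|G_x|,v-1)$ by elementary modular arithmetic, and invoke Corollary~\ref{le2}.

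For case (iii), $|{}^2G_2(q)| = q^3(q^3+1)(q-1)$ yields $v = q^3(q-1)(q+1)(q^2+q+1)$. A direct check gives $v-1 \equiv -1$ modulo each of $q$, $q-1$, $q+1$, and $q^2+q+1$, while $v - 1 \equiv 2q+1 \pmod{q^2-q+1}$; the Euclidean identity $4(q^2-q+1) = (2q+1)(2q-3) + 7$ then forces $\gcd(2q+1,q^2-q+1)\mid 7$. Hence $(|G_x|,v-1) \leq 7f$ with $f \mid 2a+1$, a quantity far smaller than $\sqrt{v}$ once $q \geq 27$, contradicting $v = k^2$. For case (ii) I would compute $v = q^4(q^2-q+1)(q^2+q+1)$ and factor $v-1 = (q^2+1)(q^6+q^2-1)$; inspecting residues modulo $q$, $q \pm 1$, and $q^2 \pm q + 1$ shows that only a $2$-part of size at most $2$ survives, so $(|G_x|,v-1) \leq 4f$, which is overwhelmed by $v \geq q^8$.

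Case (i) is the genuine obstacle. Here $v = q^3(q^3+\epsilon')/2$ with $\epsilon' = -\epsilon$, and since $(q^3-\epsilon')$ divides both $|G_x \cap X|$ and $2(v-1)$, the bound $(|G_x|,v-1) \geq (q^3-\epsilon')/2$ prevents the usual inequality $v < (|G_x|,v-1)^2$ from giving any contradiction. Instead I would analyse directly when $v = k^2$ is possible. Writing $q = p^e$, the identities $v_p(v) = 3e$ for $p$ odd and $v_2(v) = 3e-1$ for $p = 2$ force $e$ to be even or odd respectively. When $q = 2^e$ with $e \geq 3$ odd, the additional requirement that $2^{3e}+\epsilon'$ be a perfect square is contradicted by the factorisation $(y-1)(y+1) = 2^{3e}$ when $\epsilon' = +1$, or by reduction modulo $4$ when $\epsilon' = -1$. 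When $q = m^2$ is an odd square, $v = k^2$ reduces to $m^6 + \epsilon' = 2s^2$; putting $u = m^3$ gives the Pell equation $u^2 - 2s^2 = -\epsilon'$ subject to $u$ being a perfect cube, and the substitution $X = 2m^2$, $Y = 4s$ turns this into the problem of finding integer points on the Mordell curve $Y^2 = X^3 + 8\epsilon'$. The classical lists of integer points (namely $(X,Y) \in \{(-2,0),(1,\pm 3),(2,\pm 4),(46,\pm 312)\}$ on $Y^2 = X^3 + 8$, and $(X,Y) = (2,0)$ on $Y^2 = X^3 - 8$) all lead to $m = 1$, hence $q = 1$, which is excluded. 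This Diophantine analysis on $Y^2 = X^3 \pm 8$ is the nontrivial ingredient that the $\gcd$-based technique alone cannot supply, and it is the step on which the whole argument for case (i) hinges.
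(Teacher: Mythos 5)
Your case division is the same as the paper's (after Lemmas \ref{parabolic}--\ref{subfield} the survivors are $SL_3^{\epsilon}(q)\cdot 2$, the $A_1(q)^2$-type subgroup, and ${}^2G_2(q)$), and your treatments of cases (ii) and (iii) coincide with the paper's: the same gcd computations give $(|G_x|,v-1)\mid 2^2f$ and $(|G_x|,v-1)\mid 7f$ respectively, which are far below $\sqrt{v}$. The real divergence is case (i), where you correctly diagnose that the gcd method is powerless because $q^3-\epsilon$ divides both $|G_x\cap X|$ and $2(v-1)=(q^3+2\epsilon)(q^3-\epsilon)$. The paper escapes via group theory: it invokes the factorization $\Omega_7(q)=G_2(q)N_1^{\epsilon}$ together with Saxl's explicit $\Omega_7(q)$- (resp.\ $Sp_6(q)$-) subdegrees to force $k+1\mid \frac12(q^3-\epsilon 1)$ (resp.\ $k+1\mid q^3-\epsilon1$), and then a short elementary computation with $(k+1)(k-1)=v-1$ kills all parameters. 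You instead attack $v=\tfrac12 q^3(q^3+\epsilon)$ being a perfect square directly: the $p$-adic valuation forces $q$ to be a square when $p$ is odd and $e$ odd when $p=2$; the even case is elementary ($2^{3e}\pm1$ a square), and the odd case reduces via $m^6+\epsilon=2s^2$ to integral points on the Mordell curves $Y^2=X^3\pm 8$. This is valid (your reduction $X=2m^2$, $Y=4s$ is correct, and the listed integral points do yield only $m=1$), and it has the virtue of needing no subdegree information at all; but it trades the paper's cheap group-theoretic input for a nontrivial Diophantine one -- the complete lists of integral points on $Y^2=X^3\pm8$ are classical yet require a citation or a proof (e.g.\ descent in $\mathbb{Q}(\sqrt{-2})$), so as written that step is an appeal to an external fact of comparable depth to Lemma \ref{le7}. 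Two small slips: your $\epsilon'=-\epsilon$ should be $\epsilon'=\epsilon$ (since $|SL_3^{\epsilon}(q)|=q^3(q^2-1)(q^3-\epsilon)$ gives $v=q^3(q^3+\epsilon)/2$), though this is harmless as you treat both signs; and the argument should note explicitly that $\gcd\bigl(m^6,(m^6+\epsilon)/2\bigr)=1$ so that squareness of $v$ really is equivalent to squareness of $(m^6+\epsilon)/2$.
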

\begin{proof}
    Suppose that $X=G_{2}(q)$ with $q=p^{e}>2$. Then $|G|=f|X|=fq^{6}(q^6-1)(q^2-1)$, where $f\mid 2e$ for $p=3$, and $f\mid e$ for $p\neq 3$. It is obvious that $f<q$. According to Lemmas \ref{parabolic}-\ref{subfield}, the remaining candidates for $G_x\cap X$ are: $SL^{\epsilon}_{3}(q)\cdot 2$ with $\epsilon=\pm$, $d\cdot A_1(q)^2:d$ with $d=(2,q-1)$, and ${^2G_2(q)}$ with $q=3^{a}\geq 27$ and $a$ odd.

    Case (1).\ If $G_x \cap X=SL^{\epsilon}_{3}(q)\cdot 2$, then $v=\frac{q^3(q^3+\epsilon1)}{2}$. Note that the factorization $\Omega_7(q)=G_2(q)N^{\epsilon}_1$ (\cite{MR1016353}), and the $\Omega_7(q)$-suborbits are unions of $G_2(q)$-suborbits (\cite{MR928521}).
    
    If $q$ is odd, the $\Omega_7(q)$-subdegrees are $(q^3-\epsilon1)(q^3+\epsilon1)$, $\frac{1}{2}q^2(q^3-\epsilon1)$ and $\frac{1}{2}q^2(q^3-\epsilon1)(q-3)$ (\cite[Section 4, case 1]{MR1940339}). Thus, $k+1$ divides $\frac{1}{2}(q^3-\epsilon1)$ by Lemma \ref{le0}. Let $k+1=\frac{1}{2m}(q^3-\epsilon1)$, here $m$ is a positive integer. Then from $(k+1)(k-1)=v-1$, we get $$\frac{1}{2m}(q^3-\epsilon1)\cdot (\frac{1}{2m}(q^3-\epsilon1)-2)=\frac{q^3(q^3+\epsilon1)}{2}-1=\frac{(q^3-\epsilon1)(q^3+2\epsilon)}{2}.$$
    Therefore,
    $$\frac{1}{m}\cdot (\frac{1}{2m}(q^3-\epsilon1)-2)=q^3+2\epsilon.$$
    Thus,
    $$q^3=\frac{4m+3\epsilon}{1-2m^2}-2\epsilon,$$
    which forces $(\epsilon,m)=(+,m)$, then $q^3=1$, a contradiction.

    If $q$ is even, then $p=2$ and $\Omega_7(q)\cong Sp_6(q)$. The $Sp_6(q)$-subdegrees are $(q^3-\epsilon1)(q^2+\epsilon1)$ and $\frac{1}{2}q^2(q^3-\epsilon1)(q-2)$ (\cite[Section 3, case 8]{MR1940339}). Then $k+1\mid q^3-\epsilon1$. Let $k+1=\frac{1}{m}(q^3-\epsilon1)$, here $m$ is a positive integer. Similarly, we get $$\frac{1}{m}\cdot (\frac{1}{m}(q^3-\epsilon1)-2)=\frac{1}{2} (q^3+2\epsilon).$$
    Thus,
    $$q^3=\frac{4m+6\epsilon}{2-{m}^2}-2\epsilon,$$
    which forces $(\epsilon,m)=(+,1)$ or $(-,2)$, then $q^3=8$ or $1$ respectively, a contradiction.

    Case (2).\ If $G_x \cap X=d\cdot A_1(q)^2:d$ with $d=(2,q-1)$, then $|G_x\cap X|= q^2(q^2-1)^2$ and $v=q^4(q^4+q^2+1)$. Clearly, $(v-1,q)=1$, and $v-1\equiv 2 \pmod{q^2-1}$. Thus, $(|G_x|,v-1)\mid 2^2f$. Hence, $k+1\mid 2^2f$. Then we get $q^4(q^4+q^2+1)<2^{4}f^2$ for $v=k^2$, which is impossible.

    Case (3).\ If $G_x \cap X={^2G_2(q)}$ with $q=3^{a}\geq 27$ and $a$ odd, then $$|G_x\cap X|=q^3(q^3+1)(q-1)=q^3(q^2-q+1)(q^2-1),$$ $v=q^3(q^3-1)(q+1)$. It is easy to know $(v-1,q(q^2-1))=1$, and $(v-1,q^2-q+1)\mid 7$. Thus, $(|G_x|,v-1)\mid 7f$. Hence, $k+1\mid 7f$. Then we get $q^3(q^3-1)(q+1)<7^2f^2$ for $v=k^2$, which is impossible.
\end{proof}
\begin{lemma}\label{F_{4}(q)}
	The group $X$ is not $F_{4}(q)$, with $q=p^{e}$.
\end{lemma}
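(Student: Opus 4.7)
The plan is to handle $X = F_4(q)$ along exactly the same lines as Lemmas \ref{^{2}B_{2}(q)}--\ref{^{3}D_{4}(q)}. Writing $|G|=f|X|$ with $f\mid 2e$ when $p=2$ and $f\mid e$ otherwise, $f$ is logarithmic in $q$ and hence eventually much smaller than any positive power of $q$. By Lemmas \ref{parabolic} and \ref{subfield} the parabolic and subfield possibilities for $G_x\cap X$ are excluded, and the $X=F_4(2), F_4(3)$ entries of Table \ref{table:label4} dispose of the sporadic-type numerical exceptions. The surviving candidates, read off from the $F_4(q)$ rows of Table \ref{table:label1}, are $B_4(q)$, $D_4(q)$, ${^3D_4(q)}$, $A_1(q)C_3(q)$ (when $p\neq 2$), $C_4(q), C_2(q^2), C_2(q)^2$ (when $p=2$), ${^2F_4(q)}$ (when $q=2^{2e+1}$) and $A_1(q)G_2(q)$ (when $q>3$ is odd).

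For each family I will express $|G_x\cap X|$ and $v=|X|/|G_x\cap X|$ as explicit polynomials in $q$, apply the Magma XGCD routine (Remark \ref{com}) to bound $\gcd(|G_x|,v-1)$ by $f\cdot c(q)$ for some small-degree factor $c(q)$, and then invoke Corollary \ref{le2} to deduce $k+1\mid f\cdot c(q)$. The identity $v=k^2$ then yields the inequality $v<f^2c(q)^2$, which must fail for all but finitely many $q$; the remaining small values of $q$ are checked individually by requiring that $v$ be a perfect square and that $k+1\mid v-1$, exactly as in the proof of Lemma \ref{^{3}D_{4}(q)}.

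For the two cases $G_x\cap X\in\{D_4(q),\,{^3D_4(q)}\}$ I will additionally invoke Lemma \ref{le7}, which supplies two independent nontrivial subdegrees, via the chains through $G_2(q)$ and $A_3^{\epsilon}(q)$ in the first case and through $G_2(q)$ and $A_2^{\epsilon}(q)$ in the second. Lemma \ref{le0} then forces $k+1$ to divide the gcd of the corresponding indices, which is a low-degree polynomial in $q$, making these two subcases straightforward. The remaining subgroups $A_1(q)C_3(q)$, $C_4(q)$, $C_2(q^2)$, $C_2(q)^2$, ${^2F_4(q)}$ and $A_1(q)G_2(q)$ are treated by the direct gcd argument; in each of them the cyclotomic structure of $|G_x|$ relative to $v-1$ typically forces $\gcd(|G_x|,v-1)$ to divide $f$ times a short product of low-degree cyclotomic factors, and the inequality $v=k^2<f^2c(q)^2$ is then easily violated.

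The main obstacle I expect is the case $G_x\cap X=B_4(q)$ (and, when $p=2$, its companion $C_4(q)$): here $|G_x\cap X|$ is very large, $v$ is comparatively small, and $v-1$ inherits most of the cyclotomic factors appearing in $|G_x|$, so the naive gcd bound from Corollary \ref{le2} alone risks being too weak to force $v<\gcd(|G_x|,v-1)^2$. To close this case I anticipate either producing an additional subdegree coming from a factorisation of $F_4(q)$ through $B_4(q)$, or else using a modular obstruction (for instance a prime divisor of $v$ occurring to an odd power) to rule out $v$ being a perfect square in the remaining range of $q$.
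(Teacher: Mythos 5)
Your plan coincides with the paper's proof in all essentials: the same nine surviving candidates read off from Table \ref{table:label1} after Lemmas \ref{parabolic}--\ref{subfield}, the same use of Lemma \ref{le7} for the $D_4(q)$ and ${}^3D_4(q)$ cases (with precisely the chains through $G_2(q)$, $A_3^{\epsilon}(q)$, resp.\ $A_2^{\epsilon}(q)$), and direct gcd/congruence bounds via Corollary \ref{le2} (or Corollary \ref{le4} for $A_1(q)G_2(q)$) for the remaining families, with leftover small $q$ eliminated because $v$ fails to be a perfect square. The one obstacle you anticipate, for $G_x\cap X$ of type $B_4(q)$ or $C_4(q)$, does not in fact arise: there $v=q^8(q^8+q^4+1)$, and $v-1$ is coprime to $q$ and to $q^2\pm q+1$, is congruent to $2$ modulo $q^2+1$ and modulo $q^2-1$, and is divisible by $q^4+1$, so $(|G_x|,v-1)$ divides $2^{6}f(q^4+1)$ and $v=k^2$ forces $q\in\{2,3,4\}$, in each of which $v$ is not a square --- no auxiliary factorisation of $F_4(q)$ or modular obstruction is needed.
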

\begin{proof}
    Suppose that $X=F_{4}(q)$ with $q=p^{e}$, where $p$ is a prime. Then $$|G|=f|X|=fq^{24}(q^{12}-1)(q^8-1)(q^6-1)(q^2-1),$$ where $f\mid (2,p)e$. It is obvious that $f\leq q$ for $p=2$, and $f\leq \sqrt{q}$ for $p\neq 2$. According to Lemmas \ref{parabolic}-\ref{subfield}, the remaining candidates for $G_x\cap X$ are: $2\cdot \Omega_9(q)$, $(2,q-1)^2\cdot D_4(q)\cdot S_3$, ${^3D_4(q)}\cdot 3$, $2\cdot (A_1(q)\times C_3(q))\cdot 2$ with $p\neq 2$, $C_4(q)$ with $p=2$, $Sp_4(q^2)\cdot 2$ with $p=2$, $Sp_4(q)^2\cdot 2$ with $p=2$, ${^2F_4(q)}$ with $q=2^{a}\geq 2$ and $a$ odd, and $\Soc(G_x)=A_1(q)\times G_2(q)$ with $q>3$ odd.

    Case (1).\ If $G_x \cap X=2\cdot \Omega_9(q)$, then
    \begin{align*}
    |G_x \cap X|&=q^{16}(q^8-1)(q^6-1)(q^4-1)(q^2-1)\\
    &=q^{16}(q^4+1)(q^2+q+1)(q^2-q+1)(q^2+1)^2(q^2-1)^4,
    \end{align*}
    and $v=q^8(q^8+q^4+1).$
    Clearly,
    \begin{align*}
    	v-1&\equiv 1 \pmod{q},&v-1&\equiv 0 \pmod{q^4+1},\\
    	v-1&\equiv 1 \pmod{q^2+q+1}, &v-1&\equiv 1 \pmod{q^2-q+1},\\
    	v-1&\equiv 2 \pmod{q^2+1}, &v-1&\equiv 2 \pmod{q^2-1}.
    \end{align*}
    Thus, $(|G_x|,v-1)\mid f\cdot 2^{6}\cdot(q^4+1)$. Hence $k+1\mid 2^{6}f(q^4+1)$. Then we get $$q^8(q^8+q^4+1)<2^{12}f^2(q^4+1)^2$$ for $v=k^2$, which implies $q=2$, $3$ or $4$. If $q=2$, then $v=2^8\cdot 3\cdot 7\cdot 13$, which is impossible. Similarly, $v$ is not a perfect square for other cases.

    Case (2).\ Suppose $G_x \cap X=(2,q-1)^2\cdot D_4(q)\cdot S_3$. Then
    $|G_x \cap X|=6q^{12}(q^6-1)(q^4-1)^2(q^2-1)$
    and $v=\frac{1}{6}q^{12}(q^8+q^4+1)(q^4+1).$       
    If $q=2$, then $v=2^{11}\cdot 7\cdot 13\cdot 17$, a contradiction.      
    If $q\neq 2$,     
    then according to Lemma \ref{le7}, there are two subdegrees dividing $$n_1=6fq^6(q^4-1)^2, \ n_2=6(4,q-\epsilon1)fq^6(q^4-1)(q^3+\epsilon1),$$
    respectively.
    Thus, $(n_1,n_2)\mid 6(4,q-\epsilon1)fq^6(q^4-1)$. Hence, $k+1\mid 2^33f(q^4-1)$ for $(k+1,q)=1$. Then $q^{12}(q^8+q^4+1)(q^4+1)<2^73^3f^2(q^4-1)^2$ for $v=k^2$, which implies a contradiction.

    Case (3).\ Suppose $G_x \cap X={^3D_4(q)}\cdot 3$. Then 
    $$|G_x \cap X|=3q^{12}(q^8+q^4+1)(q^6-1)(q^2-1),$$
    and $v=\frac{1}{3}q^{12}(q^8-1)(q^4-1).$ 
    If $q=2$, then $v=2^{12}\cdot 3\cdot 5^2\cdot 17$, a contradiction.
    If $q\neq 2$, then
    according to Lemma \ref{le7}, there are two subdegrees dividing $$n_1=3fq^6(q^8+q^4+1), \ n_2=3(3,q-\epsilon1)fq^9(q^8+q^4+1)(q^3+\epsilon1),$$
    respectively.
    Thus, $(n_1,n_2)\mid 3fq^6(q^8+q^4+1)$. Hence, $k+1\mid 3f(q^8+q^4+1)$ for $(k+1,q)=1$. Then we get $$q^{12}(q^8-1)(q^4-1)<3^3f^2(q^8+q^4+1)^2$$ for $v=k^2$,  a contradiction.

    Case (4).\ If $G_x \cap X=2\cdot (A_1(q)\times C_3(q))\cdot 2$ with $p\neq 2$, then $$|G_x \cap X|=q^{10}(q^6-1)(q^4-1)(q^2-1)^2=q^{10}(q^2+q+1)(q^2-q+1)(q^2+1)(q^2-1)^4,$$ and
    \begin{align*}   
    v&=q^{14}(q^{10}+q^8+q^6+q^4+q^2+1)(q^4+1)\\
    &=q^{14}(q^4+1)(q^4-q^2+1)(q^2+q+1)(q^2-q+1)(q^2+1).
    \end{align*}
    Clearly, $$(v-1,q(q^2+q+1)(q^2-q+1)(q^2+1))=1.$$
    Thus, $(|G_x|,v-1)\mid f(q^2-1)^4$. Hence, $k+1\mid f(q^2-1)^4$. Then we get $q^{28}<f^2(q^2-1)^8$
    for $v=k^2$, which is impossible.

    Case (5).\ If $G_x \cap X=C_4(q)$ with $p=2$, then $|G_x \cap X|=q^{16}(q^8-1)(q^6-1)(q^4-1)(q^2-1)^2$ and $v=q^{8}(q^8+q^4+1).$ Similar to case (1), we can obtain a contradiction.

    Case (6).\ If $G_x \cap X=Sp_4(q^2)\cdot 2$ with $p=2$, then $$|G_x \cap X|=2q^{8}(q^8-1)(q^4-1)=2q^{8}(q^4+1)(q^2+1)^2(q^2-1)^2,$$ and $$v=\frac{1}{2}q^{16}(q^8+q^4+1)(q^6-1)(q^2-1).$$     
    Clearly,
    $(v-1,q(q^2-1))=1$, and $v-1\equiv 5 \pmod{q^2+1}$. Thus, $(|G_x|,v-1)\mid 2f\cdot 5^2(q^4+1)$. Hence, $k+1\mid 2\cdot 5^2f(q^4+1)$. Then we get $$q^{16}(q^8+q^4+1)(q^6-1)(q^2-1)<2^35^4f^2(q^4+1)^2$$ for $v=k^2$, which is a contradiction.

    Case (7).\ If $G_x \cap X=Sp_4(q)^2\cdot 2$ with $p=2$, then $$|G_x \cap X|=2q^{8}(q^4-1)^2(q^2-1)^2=2q^{8}(q^2+1)^2(q^2-1)^4,$$ and $$v=\frac{1}{2}q^{16}(q^8+q^4+1)(q^4+q^2+1)(q^4+1).$$    
    Clearly,
    $(v-1,q)=1$, and $v-1\equiv 2 \pmod{q^2+1}$. Thus, $k+1\mid 2^{3}f(q^2-1)^4$ for $(|G_x|,v-1)\mid 2^{3}f(q^2-1)^4$. Then we get $q^{32}<2^{7}f^2(q^2-1)^8$ for $v=k^2$, a contradiction.

    Case (8).\ If $G_x \cap X={^2F_4(q)}$ with $q=2^{a}\geq 2$ and $a$ odd, then
    \begin{align*}
    |G_x \cap X|&=q^{12}(q^6+1)(q^4-1)(q^3+1)(q-1)\\
    &=q^{12}(q^4-q^2+1)(q^2-q+1)(q^2+1)^2(q^2-1)^2,
    \end{align*}
    and $$v=q^{12}(q^6-1)(q^4+1)(q^3-1)(q+1).$$    
    Clearly, $(v-1,q(q^2-q+1)(q^2-1))=1$.
    Thus, $(|G_x|,v-1)\mid f(q^4-q^2+1)(q^2+1)^2$. Hence, $k+1\mid f(q^4-q^2+1)(q^2+1)^2$. Then we get $q^{26}<f^2(q^4-q^2+1)^2(q^2+1)^4$ for $v=k^2$, which implies a contradiction.
    
    Case (9).\ If $\Soc(G_x)=A_1(q)\times G_2(q)$ with $q>3$ odd, then $$|G_x|=c\cdot q(q^2-1)\cdot q^6(q^6-1)(q^2-1)=cq^{7}(q^6-1)(q^2-1)^2,$$ and $$v=\frac{f}{c}q^{17}(q^{10}+q^8+q^6+q^4+q^2+1)(q^8-1),$$
    where $c\mid 4e^2$. By Corollary \ref{le4}, we get $v<|G_x|^2_{p^{\prime}}$.
    Thus, $$v<c^2(q^6-1)^2(q^2-1)^4<2^4e^4q^{20},$$     
    which is a contradiction.
\end{proof}    
\begin{lemma}\label{E_{6}(q)}
	The group $X$ is not $E^{\epsilon}_{6}(q)$, with $q=p^{e}$.
\end{lemma}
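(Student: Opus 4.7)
The plan is to follow the template established in Lemmas \ref{^{3}D_{4}(q)}--\ref{F_{4}(q)}. Suppose $X = E_6^\epsilon(q)$ with $q = p^e$, so that
\[
|G| = f|X| = \frac{f}{d}\,q^{36}(q^{12}-1)(q^9-\epsilon 1)(q^8-1)(q^6-1)(q^5-\epsilon 1)(q^2-1),
\]
where $d = (3,q-\epsilon 1)$ and $f \mid 2de$; in particular $f$ is small compared to $q$. By Lemmas \ref{parabolic}, \ref{easy} and \ref{subfield}, the parabolic candidates, the $(\epsilon,q)=(-,2)$ sporadic entries ($J_3$, $Alt_{12}$, $B_3(3){:}2$, $Fi_{22}$), and the subfield candidates from Table \ref{table:label1} are already excluded. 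Hence the remaining candidates for $G_x \cap X$ are $A_1(q)A_5^\epsilon(q)$, $F_4(q)$, $C_4(q)$ (for $p$ odd), $(q+1)\cdot {^2D_5(q)}$ (only for $\epsilon=-$), $(q-\epsilon)^2\cdot D_4(q)$, and $(q^2+\epsilon q+1)\cdot {^3D_4(q)}$.

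For each such candidate I compute $|G_x \cap X|$ and $v = |X|/|G_x\cap X|$, then evaluate $v-1$ modulo each of the small factors $q,\ q\pm 1,\ q^2\pm q+1,\ q^2+1,\dots$ that appear in $|G_x|$. The Magma command XGCD (Remark \ref{com}) then yields a polynomial upper bound $(|G_x|,v-1) \leq f\cdot R_X(q)$ of modest degree in $q$, and Corollary \ref{le2} gives $k+1 \mid f\cdot R_X(q)$. Plugging into $v = k^2$ produces an inequality $v < f^2 R_X(q)^2$ which forces $q$ into a short list of small values; each residual case is then disposed of by verifying directly that $v$ is not a perfect square, or that $k+1 \nmid v-1$. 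In the cases where $G_x \cap X$ also appears in Table \ref{table:label2}---namely $A_1(q)A_5^\epsilon(q)$, the $D_5^\epsilon T_1^\epsilon$ type, and the $C_4$ type---Lemma \ref{le7} supplies two or more explicit subdegrees whose gcd is a strictly sharper bound on $k+1$ than Corollary \ref{le2} alone.

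The main obstacle will be the $F_4(q)$ and $(q+1)\cdot{^2D_5(q)}$ cases: here $|G_x \cap X|$ is a substantial fraction of $|X|$, so the index $v$ is small relative to $|G_x|^2$ and the naive inequality $v < (|G_x|, v-1)^2$ is tight. In these cases I will need to extract multiple independent subdegrees via Lemma \ref{le7} and take their gcd carefully to drive the inequality to a contradiction. Every other case reduces to a polynomial inequality of the same shape as those handled in Lemmas \ref{^{3}D_{4}(q)}--\ref{F_{4}(q)}, and the finitely many small-$q$ residual cases are eliminated by direct computation of $v$.
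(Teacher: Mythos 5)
Your overall strategy --- enumerate the non-parabolic, non-subfield, non-sporadic candidates from Table \ref{table:label1}, bound $k+1$ via $(|G_x|,v-1)$ or via gcds of subdegrees from Lemma \ref{le7}, and play this off against $v=k^2$ --- is exactly the paper's, and your list of remaining candidates is correct. For $A_1(q)A_5^\epsilon(q)$, the $D_5^\epsilon(q)$ type and the $C_4(q)$ type the paper does precisely what you describe (Lemma \ref{le7} plus a polynomial inequality, with the leftover small value $(\epsilon,q)=(-,2)$ killed because $v$ is not a perfect square), and for the $D_4(q)$ and ${}^3D_4(q)$ normalizers it uses the bound $v<|G_x|^2_{p'}$ of Corollary \ref{le4}, which your gcd computation would essentially reproduce.

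The genuine gap is the case $\Soc(G_x)=F_4(q)$, which you rightly single out as the hard one but propose to settle with Lemma \ref{le7}. That lemma does not apply here: $F_4(q)$ is not among the subgroups $H_0$ of $E_6^\epsilon(q)$ listed in Table \ref{table:label2}, so it supplies no subdegrees for this action. Nor do the generic bounds close the case: $v=\frac{f}{cd}q^{12}(q^9-\epsilon 1)(q^5-\epsilon 1)$ has degree $26$ in $q$ while $|G_x|_{p'}$ has degree $28$, so $v<|G_x|^2_{p'}$ is vacuous, and there is no a priori reason for $(|G_x|,v-1)$ to fall below $q^{13}$. The paper imports an external input at exactly this point, namely Lawther's results on folding actions (\cite[Theorems 1 and 10]{MR1204065}), which yield two subdegrees dividing $q^8(q^8+q^4+1)$ and $q^{12}(q^8-1)(q^4-1)$; their gcd divides $3q^8$, whence $k+1\mid 3$ since $(k+1,q)=1$, an immediate contradiction with $v=k^2$. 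Without this (or some equivalent source of subdegrees for the action of $E_6^\epsilon(q)$ on cosets of $F_4(q)$), your argument does not close.
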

\begin{proof}
    Suppose that $X$ = $E_{6}^{\epsilon}(q)$ with $q=p^{e}$, where $p$ is a prime. Then $$|G|=f|X|=\frac{f}{d}q^{36}(q^{12}-1)(q^9-\epsilon1)(q^8-1)(q^6-1)(q^5-\epsilon1)(q^2-1),$$ where $f\mid 2de$, $d=(3,q-\epsilon1)$ and $\epsilon=\pm$. It is obvious that $f<q^2$. According to Lemmas \ref{parabolic}-\ref{subfield}, the remaining candidates for $G_x\cap X$ are: $(2,q-1)\cdot (A_1(q)\times A^{\epsilon}_5(q))\cdot (2,q-1)\cdot d$, $\Soc(G_x)=F_4(q)$, $(4,q-\epsilon1)\cdot (D_5^{\epsilon}(q)\times (\frac{q-\epsilon1}{(4,q-\epsilon1)}))\cdot (4,q-\epsilon1)$, $\Soc(G_x)=C_4(q)$ with $p$ odd, $(2,q-1)^2\cdot (D_4(q)\times (\frac{q-\epsilon1}{(2,q-1)})^2)\cdot (2,q-1)^2\cdot S_3$ with $(q,\epsilon)\neq(2,+)$, and $({^3D_4(q)}\times (q^2+\epsilon q+1))\cdot 3$ with $(q,\epsilon)\neq(2,-)$.

    Case (1).\ If $G_x \cap X=(2,q-1)\cdot (A_1(q)\times A^{\epsilon}_5(q))\cdot (2,q-1)\cdot d$, then $$|G_x \cap X|=q^{16}(q^{6}-1)(q^5-\epsilon1)(q^4-1)(q^3-\epsilon1)(q^2-1)^2,$$ and $$v=\frac{1}{d}q^{20}(q^{10}+q^8+q^6+q^4+q^2+1)(q^6+\epsilon q^3+1)(q^4+1).$$
    According to Lemma \ref{le7}, there are two subdegrees dividing $$n_1=d^2fq^{10}(q^5-\epsilon1)(q^4-1)(q^3+\epsilon1), \ n_2=(3,q^2-1)fq^{10}(q^5-\epsilon1)(q^3-\epsilon1)(q^2-1)^2,$$
    respectively.
    Thus, $(n_1,n_2)\mid (3,q^2-1)fq^{10}(q^5-\epsilon1)(q^3-\epsilon1)(q^2-1)^2$. Hence, $k+1$ divides $3f(q^5-\epsilon1)(q^3-\epsilon1)(q^2-1)^2$ for $(k+1,q)=1$. It follows that $$v=k^2<3^2f^2(q^5-\epsilon1)^2(q^3-\epsilon1)^2(q^2-1)^4<2^43^2f^2q^{24},$$ which implies a contradiction.

    Case (2).\ If $\Soc(G_x)=F_4(q)$, then $|G_x|=c|\Soc(G_x)|$ where $c\mid (2,p)e$.
    Thus,
    $$|G_x \cap X|=\frac{c}{f}q^{24}(q^{12}-1)(q^8-1)(q^6-1)(q^2-1),$$
    and $v=\frac{f}{cd}q^{12}(q^9-\epsilon1)(q^{5}-\epsilon1).$
    According to \cite[Theorems 1 and 10]{MR1204065}, there are two subdegrees dividing
    $$n_1=q^8(q^8+q^4+1),\ n_2=q^{12}(q^8-1)(q^4-1).$$
    Thus, $(n_1,n_2)\mid 3q^8$. Hence, $k+1\mid 3$ for $(k+1,q)=1$. It implies a contradiction for $v=k^2<3^2$.

    Case (3).\ Suppose $G_x \cap X=(4,q-\epsilon1)\cdot (D_5^{\epsilon}(q)\times (\frac{q-\epsilon1}{(4,q-\epsilon1)}))\cdot (4,q-\epsilon1)$. Then $$|G_x\cap X|=q^{20}(q^8-1)(q^6-1)(q^5-\epsilon1)(q^4-1)(q^2-1)(q-\epsilon1),$$ and $$v=q^{16}(q^{12}-1)(q^9-\epsilon1)/d(q^4-1)(q-\epsilon1).$$
    If $(\epsilon,q)=(-,2)$, then $v=2^{16}\cdot 3^2\cdot 7\cdot 13\cdot 19$, a contradiction.
    If $(\epsilon,q)\neq(-,2)$, then
    according to Lemma \ref{le7}, there are two subdegrees dividing $$n_1=(4,q^4-1)fq^{8}(q^{5}-\epsilon1)(q^4+1)(q-\epsilon1), \ n_2=(5,q-\epsilon1)fq^{10}(q^{8}-1)(q^3+\epsilon1)(q-\epsilon1),$$
    respectively.
    Thus, $(n_1,n_2)\mid (5,q-\epsilon1)fq^{8}(q^{8}-1)(q^3+\epsilon1)(q-\epsilon1)$.
    Hence, $k+1\mid 5f(q^{8}-1)(q^3+\epsilon1)(q-\epsilon1)$. Then we get $$q^{16}(q^{12}-1)(q^9-\epsilon1)<5^2f^2(q^{8}-1)^2(q^4-1)(q^3+\epsilon1)^2(q-\epsilon1)^3$$ for $v=k^2$, which is impossible.

    Case (4).\ If $\Soc(G_x)$ = $C_4(q)$ with $q$ odd. By \cite[Remark 4.2]{MR4401918}, $G_x\cap X\cong PSp_8(q)\cdot 2$ in this case. Then
    $$|G_x\cap X|=q^{16}(q^8-1)(q^6-1)(q^4-1)(q^2-1),$$ and
    $$v=q^{20}(q^{12}-1)(q^9-\epsilon1)(q^5-\epsilon1)/d(q^4-1).$$
    According to Lemma \ref{le7}, there exists a subdegree dividing $$n=(2,q-1)^2fq^{8}(q^4+q^2+1)(q^4+1).$$
    Thus, $k+1\mid 2^2f(q^4+q^2+1)(q^4+1)$ for $(k+1,q)=1$. Then we get $$q^{45}<q^{20}(q^{12}-1)(q^9-\epsilon1)(q^5-\epsilon1)<2^4f^2(q^4+q^2+1)^2(q^4+1)^2\cdot d(q^4-1)<2^83f^2q^{20}$$ for $v=k^2$, which is a contradiction.

    Case (5).\ If $G_x \cap X=(2,q-1)^2\cdot (D_4(q)\times (\frac{q-\epsilon1}{(2,q-1)})^2)\cdot (2,q-1)^2\cdot S_3$ with $(\epsilon,q)\neq (+,2)$, then $$|G_x \cap X|=6q^{12}(q^{6}-1)(q^4-1)^2(q^2-1)(q-\epsilon1)^2,$$ and $$v=q^{24}(q^{12}-1)(q^9-\epsilon1)(q^8-1)(q^5-\epsilon1)/6d(q^4-1)^2(q-\epsilon1)^2.$$
    From Lemma \ref{le4}, it follows that $$v<|G_x|^2_{p'}=6^2f^2(q^{6}-1)^2(q^4-1)^4(q^2-1)^2(q-\epsilon1)^4,$$ which forces $(\epsilon,q)=(-,2)$. Then we get $v=2^{23}\cdot 3\cdot 7\cdot 11\cdot 13\cdot 17\cdot 19$, a contradiction.

    Case (6).\ If $G_x \cap X$ = $({^3D_4(q)}\times (q^2+\epsilon q+1))\cdot 3$ with $(\epsilon,q)\neq (-,2)$, then $$|G_x \cap X|=3q^{12}(q^8+q^4+1)(q^{6}-1)(q^2-1)(q^2+\epsilon q+1),$$ and $$v=q^{24}(q^9-\epsilon1)(q^8-1)(q^5-\epsilon1)(q^4-1)/3d(q^2+\epsilon q+1).$$    
    From Lemma \ref{le4}, it follows that $$v<|G_x|^2_{p'}=3^2f^2(q^8+q^4+1)^2(q^{6}-1)^2(q^2-1)^2(q^2+\epsilon q+1)^2,$$ which implies a contradiction.
\end{proof}
\begin{lemma}\label{E_{7}(q)}
	The group $X$ is not $E_{7}(q)$, with $q=p^{e}$.
\end{lemma}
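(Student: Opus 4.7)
The plan is to follow the systematic case analysis pioneered in Lemmas \ref{F_{4}(q)} and \ref{E_{6}(q)}. Suppose for contradiction that $X=E_7(q)$ with $q=p^e$. Then
$$|G|=f|X|=\frac{f}{d}q^{63}(q^{18}-1)(q^{14}-1)(q^{12}-1)(q^{10}-1)(q^8-1)(q^6-1)(q^2-1),$$
where $d=(2,q-1)$ and $f$ divides $|\Out(X)|=de$, so in particular $f<q^2$ for all admissible $q$. By Lemma \ref{parabolic}, $G_x\cap X$ is not parabolic; by Lemma \ref{easy} the outlier $(X,G_x\cap X)=(E_7(2),Fi_{22})$ is excluded; and by Lemma \ref{subfield} no subfield possibility $E_7(q^{1/r})$ survives. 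Consulting Table \ref{table:label1}, the remaining candidates for $G_x\cap X$ are (up to the precise central/outer adjustments described in \cite[Table 5.1]{MR1168190}):
\begin{enumerate}
\item[(i)] type $(q-\epsilon)E_6^{\epsilon}(q)$, i.e.\ essentially $E_6^{\epsilon}(q)\cdot T_1^{\epsilon}$ extended by graph/diagonal automorphisms;
\item[(ii)] type $A_1(q)D_6(q)$;
\item[(iii)] type $A_7^{\epsilon}(q)$;
\item[(iv)] type $A_1(q)F_4(q)$.
\end{enumerate}

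For cases (i), (ii), (iii) I would invoke Lemma \ref{le7}: each of these appears in Table \ref{table:label2}, providing two (or in one sub-case, one) explicit subdegrees $n_1,n_2$ which divide manageable polynomial expressions in $q$. By Lemma \ref{le0}, $k+1$ divides $(n_1,n_2)$, which will be computed either directly or via Magma as in Remark \ref{com}; then I would compare $v=k^2$ against the square of this divisor. For instance, in case (i) the two subdegrees divide expressions involving $F_4(q)$ and $D_5^{\epsilon}(q)$, and I expect $(n_1,n_2)$ to divide roughly $f\cdot(q^{12}-1)(\text{small factors})$, whereas $v$ is of the order $q^{54}$, giving an easy contradiction. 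In case (ii) one gets $v$ of order $q^{40}$ against a gcd bounded by roughly $f(q^6-1)\cdot(\text{small})$, and in case (iii) $v$ is of order $q^{42}$ against a similarly small divisor. Each comparison should be comfortably polynomial in $q$ and hence exclude all $q$, with any borderline small values $q=2,3$ inspected individually to verify that $v$ cannot be a perfect square.

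Case (iv), the product $A_1(q)F_4(q)$, is not listed in Table \ref{table:label2}, so I would handle it via Corollary \ref{le4} instead: since $G_x$ is non-parabolic we have $|G|<|G_x|\cdot|G_x|^2_{p'}$. Computing
$$|G_x\cap X|=\tfrac{1}{(2,q-1)}q^{25}(q^2-1)\cdot q^{24}(q^{12}-1)(q^8-1)(q^6-1)(q^2-1)$$
(up to the $(2,q-1)$-adjustment) and noting $|G_x|_{p'}$ is the $p'$-part, this yields $v<(|G_x|_{p'})^2$, an inequality which, after substituting the full factorisations, compares a term of degree roughly $63$ in $q$ against a term of degree at most $2\cdot 32$ times a constant in $f$; this is easily violated for all $q\geq 2$. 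The main obstacle I anticipate is bookkeeping: keeping track of the exact central extension constants, the factors $d=(2,q-1)$, the outer automorphism contribution $f$, and the precise form of the subdegrees from \cite[Remark 4.2]{MR4401918}, so that the gcd computations are tight enough to force the contradiction. Small residual exceptions (most plausibly $q=2$ or $q=3$) would be verified by direct arithmetic showing $v$ is not a square or that $k+1\nmid v-1$.
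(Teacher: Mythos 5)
Your proposal follows essentially the same route as the paper: the same four surviving candidates from Table \ref{table:label1} after Lemmas \ref{parabolic}--\ref{subfield}, Lemma \ref{le7} subdegree-gcd arguments combined with $v=k^2<(k+1)^2$ for the types $(q-\epsilon)E_6^{\epsilon}(q)$, $A_1(q)D_6(q)$ and $A_7^{\epsilon}(q)$, and individual checks that $v$ is not a square for the small-$q$ exceptions such as $(\epsilon,q)=(-,2)$ and $q=2$. The only deviation is in the $A_1(q)F_4(q)$ case, where you use the $p'$-part bound of Corollary \ref{le4} while the paper computes $(|G_x|,v-1)$ directly via Corollary \ref{le2}; both give the contradiction, and your rough degree estimates for $v$ in the intermediate cases, though imprecise, err only on the side that makes the final inequalities easier to violate.
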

\begin{proof}
    Suppose that $X$ = $E_{7}(q)$ with $q=p^{e}$, where $p$ is a prime. Then $$|G|=f|X|=\frac{f}{d}q^{63}(q^{18}-1)(q^{14}-1)(q^{12}-1)(q^{10}-1)(q^8-1)(q^6-1)(q^2-1),$$ where $f\mid de$, and $d=(2,q-1)$. It is obvious that $f<q$. According to Lemmas \ref{parabolic}-\ref{subfield}, the remaining candidates for $G_x\cap X$ are: $(3,q-\epsilon1)\cdot (E^{\epsilon}_6(q)\times (q-\epsilon1)/(3,q-\epsilon1))\cdot (3,q-\epsilon1)\cdot 2$ with $\epsilon=\pm$ , $d\cdot (A_1(q)\times D_6(q))\cdot d$, $\frac{(4,q-\epsilon1)}{d}\cdot (A^{\epsilon}_7(q)\cdot \frac{(8,q-\epsilon1)}{d}\cdot (2\times \frac{2d}{(4,q-\epsilon1)}))$ with $\epsilon=\pm$, and $\Soc(G_x)=A_1(q)F_4(q)$ with $q>3$.

    Case (1).\ Suppose If $G_x \cap X$ = $(3,q-\epsilon1)\cdot (E^{\epsilon}_6(q)\times (q-\epsilon1)/(3,q-\epsilon1))\cdot (3,q-\epsilon1)\cdot 2$ with $\epsilon=\pm$. Then $$|G_x \cap X|=2q^{36}(q^{12}-1)(q^{9}-\epsilon1)(q^8-1)(q^6-1)(q^5-\epsilon1)(q^2-1)(q-\epsilon1),$$ and $$v=q^{27}(q^{14}-1)(q^9+\epsilon1)(q^5+\epsilon1)/2d(q-\epsilon1).$$
    If $(\epsilon,q)=(-,2)$, then $v=2^{26}\cdot 3\cdot 31\cdot 43\cdot 73\cdot 127$, which is a contradiction.
    If $(\epsilon,q)\neq (-,2)$, then
    according to Lemma \ref{le7}, there are two subdegrees dividing $$n_1=2fq^{12}(q^9-\epsilon1)(q^5-\epsilon1)(q-\epsilon1), \ n_2=2(4,q^5-\epsilon1)fq^{16}(q^9-\epsilon1)(q^8+q^4+1)(q-\epsilon1),$$
    respectively.
    Thus, $(n_1, n_2)\mid 2f\cdot 2 (4,q^5-\epsilon1)q^{12}(q^9-\epsilon1)(q-\epsilon1)$. Hence, $k+1\mid 2^4f(q^9-\epsilon1)(q-\epsilon1)$ for $(k+1,q)=1$. Then we get $$q^{27}(q^{14}-1)(q^9+\epsilon1)(q^5+\epsilon1)<2^9df^2(q^9-\epsilon1)^2(q-\epsilon1)^3$$ for $v=k^2$, which is impossible.

    Case (2).\ If $G_x \cap X$ = $d\cdot (A_1(q)\times D_6(q))\cdot d$, then $$|G_x \cap X|=\frac{1}{d}q^{31}(q^{10}-1)(q^{8}-1)(q^6-1)^2(q^4-1)(q^2-1)^2,$$ and $$v=q^{32}(q^{18}-1)(q^{14}-1)(q^6+1)/(q^4-1)(q^2-1).$$
    According to Lemma \ref{le7}, there exists a subdegree dividing $$n=\frac{(6,q-\epsilon1)}{(2,q-1)}fq^{16}(q^{8}-1)(q^5+\epsilon1)(q^{3}+\epsilon1)(q^2-1),$$
    where $\epsilon=\pm$.
    Thus, $k+1\mid 6f(q^{8}-1)(q^5+\epsilon1)(q^{3}+\epsilon1)(q^2-1)$. Then we get $$q^{70}<2^23^2q^{28}(q^5+\epsilon1)^2(q^{3}+\epsilon1)^2$$ for $v=k^2$, which is a contradiction.

    Case (3).\  Suppose $G_x \cap X$ = $\frac{(4,q-\epsilon1)}{d}\cdot (A^{\epsilon}_7(q)\cdot \frac{(8,q-\epsilon1)}{d}\cdot (2\times \frac{2d}{(4,q-\epsilon1)}))$ with $\epsilon=\pm$, then $$|G_x \cap X|=\frac{4}{d}q^{28}(q^{8}-1)(q^7-\epsilon1)(q^6-1)(q^5-\epsilon1)(q^4-1)(q^3-\epsilon1)(q^2-1),$$ and $$v=q^{35}(q^{18}-1)(q^{12}-1)(q^7+\epsilon1)(q^5+\epsilon1)/4 (q^4-1)(q^3-\epsilon1).$$
        
    If $q=2$ and $\epsilon=\pm$, then $v=2^{33}\cdot 3^6\cdot 7\cdot 11\cdot 13\cdot 19\cdot 43\cdot 73$ or $2^{33}\cdot 3^2\cdot 7^2\cdot 13\cdot 19\cdot 31\cdot 73\cdot 127$ respectively, a contradiction.
    
    If $q>2$ even, then according to Lemma \ref{le7}, there exists a subdegree dividing $$n=4fq^{12}(q^7-\epsilon1)(q^5-\epsilon1)(q^{3}-\epsilon1).$$
    Thus, $k+1\mid 2^2f(q^7-\epsilon1)(q^5-\epsilon1)(q^{3}-\epsilon1)$. Therefore, $$v=k^2<2^4f^2(q^7-\epsilon1)^2(q^5-\epsilon1)^2(q^{3}-\epsilon1)^2,$$ which is a contradiction.
    
    If $q$ odd, then according to Lemma \ref{le7}, there exists a subdegree dividing $$n=8fq^{16}(q^7-\epsilon1)(q^5-\epsilon1)(q^4+1)(q^{3}-\epsilon1).$$
    Thus, $k+1\mid2^3f(q^7-\epsilon1)(q^5-\epsilon1)(q^4+1)(q^{3}-\epsilon1)$. Therefore, $$v=k^2<2^6f^2(q^7-\epsilon1)^2(q^5-\epsilon1)^2(q^4+1)^2(q^{3}-\epsilon1)^2,$$ which is a contradiction.

    Case (4).\ If $\Soc(G_x)$ = $A_1(q)\times F_4(q)$ with $q>3$, then $$|G_x \cap X|=\frac{c}{f}q^{25}(q^{12}-1)(q^8-1)(q^6-1)(q^2-1)^2,$$ and $$v=\frac{f}{cd}q^{36}(q^{12}-1)(q^4+1)(q^2+1),$$ where $c\mid (2,p)de^2$.
    Clearly, 
    \begin{align*}
    &(cd(v-1),q^{12}-1)\mid cd, &(cd(v-1),q^{8}-1)\mid cd,\\ &(cd(v-1),q^{6}-1)\mid cd, &(cd(v-1),q^{2}-1)\mid cd.
    \end{align*}
     Thus, $(|G_x|,v-1)\mid c\cdot {(cd)}^5$. Hence, $k+1\mid c^6d^5$. Therefore, $v=k^2<c^{12}d^{10}$, which is a contradiction.
\end{proof}
\begin{lemma}\label{E_{8}(q)}
	The group $X$ is not $E_{8}(q)$, with $q=p^{e}$.
\end{lemma}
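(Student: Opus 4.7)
The plan is to follow the same template used in Lemmas \ref{^{2}B_{2}(q)}--\ref{E_{7}(q)}. First I would record that
\[
|E_8(q)| = q^{120}(q^{30}-1)(q^{24}-1)(q^{20}-1)(q^{18}-1)(q^{14}-1)(q^{12}-1)(q^8-1)(q^2-1),
\]
that $|\Out(X)| = e$, and therefore $f \mid e$, so in particular $f < q$. By Lemma \ref{parabolic}, $G_x \cap X$ is not parabolic; by Lemma \ref{subfield}, it is not a subfield subgroup $E_8(q^{1/r})$; and Lemma \ref{easy} accounts for the small exceptional lines. Looking at the $E_8(q)$ row of Table \ref{table:label1} then leaves exactly three candidates: $A_1(q)E_7(q)$, $D_8(q)$, and $A_2^{\epsilon}(q)E_6^{\epsilon}(q)$ with $\epsilon=\pm$, which I would dispose of case by case.

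For the $A_1(q)E_7(q)$ case, Table \ref{table:label2} supplies two entries, giving nontrivial subdegrees $n_1$ and $n_2$ dividing $|H:A_1(q)^2 D_6(q)|$ and $|H:E_6^{\epsilon}(q)|$ respectively. By Lemma \ref{le0}, $k+1$ must divide $\gcd(n_1,n_2)$, which is a polynomial in $q$ of moderate degree (much smaller than the degree of $v$). Squaring and comparing with $v=|E_8(q)|/|A_1(q)E_7(q)|$ (a polynomial of degree $112$ in $q$) yields a contradiction for all but possibly a handful of small $q$, which I then check by hand to confirm $v$ is not a perfect square.

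For the two remaining cases, Table \ref{table:label2} offers nothing, so I would fall back on Corollary \ref{le2} and control $(|G_x|,v-1)$ directly. Writing $v$ as a ratio of products of factors $(q^{d_i}-1)$ and reducing modulo each cyclotomic divisor $(q^{2i}-1)$ of $|G_x \cap X|_{p'}$ (exactly as in Lemma \ref{F_{4}(q)}, Case~1, and Lemma \ref{E_{7}(q)}, Case~4), one finds $v \equiv \text{(small constant)} \pmod{q^{2i}-1}$ for each such factor. Consequently $(|G_x|,v-1)$ divides $f$ times a bounded integer, and since $v$ has degree $128$ for $D_8(q)$ and even larger for $A_2^{\epsilon}(q)E_6^{\epsilon}(q)$, the inequality $v = k^2 < (|G_x|,v-1)^2$ forces $q$ to lie in a tiny finite list that can be cleared by direct computation.

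The main obstacle will be the $D_8(q)$ case, since $D_8(q)$ is the densest of the three stabilizers and so $v$ sits closest to $|G_x|^2$, leaving the narrowest margin. Here the modular reductions must be carried out factor by factor and I expect to use the Magma \texttt{XGCD} technique highlighted in Remark \ref{com} to certify that the residues of $v-1$ modulo each $(q^{2i}-1)$ are indeed small constants. The few low-$q$ residual cases (e.g.\ $q=2$, and in the $A_2^{\epsilon}E_6^{\epsilon}$ case the extra difficulty created by the diagonal factor $(3,q-\epsilon 1)$) will be eliminated numerically, mirroring the pattern of Lemmas \ref{easy} and \ref{^{3}D_{4}(q)}.
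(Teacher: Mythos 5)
Your proposal is correct and follows essentially the same route as the paper: after Lemmas \ref{parabolic}--\ref{subfield} the same three candidates $A_1(q)E_7(q)$, $D_8(q)$ and $A_2^{\epsilon}(q)E_6^{\epsilon}(q)$ survive, the first is eliminated via the two Table \ref{table:label2} subdegrees exactly as in the paper, and the other two by bounding $(|G_x|,v-1)$ against $v=k^2$. The only difference is that the paper gets away with cruder bounds than your full modular reduction --- for $D_8(q)$ it merely discards the odd-order cyclotomic factors (which are coprime to $v-1$) and keeps $f(q^{14}-1)(q^4+1)^2(q^2+1)^4(q^2-1)^7$ wholesale, and for $A_2^{\epsilon}(q)E_6^{\epsilon}(q)$ it simply invokes Corollary \ref{le4} --- so all three contradictions hold uniformly in $q$ with no residual small-$q$ list to clear.
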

\begin{proof}
    Suppose that $X$ = $E_{8}(q)$ with $q=p^{e}$, where $p$ is a prime. Then $$|G|=f|X|=fq^{120}(q^{30}-1)(q^{24}-1)(q^{20}-1)(q^{18}-1)(q^{14}-1)(q^{12}-1)(q^8-1)(q^2-1),$$ where $f\mid e$. It is obvious that $f<q$.  According to Lemmas  \ref{parabolic}-\ref{subfield}, the remaining candidates for $G_x\cap X$ are: $(2,q-1)\cdot (A_1(q)\times E_7(q))\cdot (2,q-1)$, $(2,q-1)\cdot D_8(q)\cdot (2,q-1)$, and $(3,q-\epsilon1)\cdot (A_2^{\epsilon}(q)\times E_6^{\epsilon}(q))\cdot (3,q-\epsilon1)\cdot 2$ with $\epsilon=\pm$.

    Case (1).\ If $G_x \cap X$ = $(2,q-1)\cdot (A_1(q)\times E_7(q))\cdot (2,q-1)$, then $$|G_x \cap X|=q^{64}(q^{18}-1)(q^{14}-1)(q^{12}-1)(q^{10}-1)(q^8-1)(q^6-1)(q^2-1)^2,$$ and $$v=q^{56}(q^{30}-1)(q^{24}-1)(q^{20}-1)/(q^{10}-1)(q^6-1)(q^2-1).$$
    According to Lemma \ref{le7}, there are two subdegrees dividing $$n_1=(2,q-1)^4fq^{32}(q^{18}-1)(q^{14}-1)(q^6+1)/(q^4-1)(q^2-1),$$
    and $$n_2=(3,q-\epsilon1)fq^{28}(q^{14}-1)(q^9+\epsilon1)(q^5+\epsilon1)(q^2-1),$$
    respectively.
    Thus, $(n_1,n_2)\mid (3,q-\epsilon1)fq^{28}(q^{14}-1)(q^9+\epsilon1)(q^5+\epsilon1)(q^2-1)$. Therefore, $k+1$ divides $3f(q^{14}-1)(q^9+\epsilon1)(q^5+\epsilon1)(q^2-1)$ for $(k+1,p)=1$. Then we get $$q^{112}<3^2f^2(q^{14}-1)^2(q^9+\epsilon1)^2(q^5+\epsilon1)^2(q^2-1)^2$$ for $v=k^2$, which is a contradiction.

    Case (2).\ If $G_x \cap X$ = $(2,q-1)\cdot D_8(q)\cdot (2,q-1)$, then $$|G_x \cap X|=q^{56}(q^{14}-1)(q^{12}-1)(q^{10}-1)(q^8-1)^2(q^6-1)(q^4-1)(q^2-1),$$ and $$v=q^{64}(q^{30}-1)(q^{24}-1)(q^{20}-1)(q^{18}-1)/(q^{10}-1)(q^8-1)(q^6-1)(q^4-1).$$
    Clearly, $$(q(q^8+q^6+q^4+q^2+1)(q^4+q^2+1)(q^4-q^2+1),v-1)=1.$$ 
    Then, $$(|G_x|,v-1)=(f(q^{14}-1)(q^4+1)^2(q^2+1)^4(q^2-1)^7,v-1).$$
    Thus, $k+1\mid f(q^{14}-1)(q^4+1)^2(q^2+1)^4(q^2-1)^7$. Therefore, $$v=k^2<f^2(q^{14}-1)^2(q^4+1)^4(q^2+1)^8(q^2-1)^{14},$$ which implies a contradiction.

    Case (3).\ If $G_x \cap X$ = $(3,q-\epsilon1)\cdot (A_2^{\epsilon}(q)\times E_6^{\epsilon}(q))\cdot (3,q-\epsilon1)\cdot 2$ with $\epsilon=\pm$, then $$|G_x \cap X|=2q^{39}(q^{12}-1)(q^{9}-\epsilon1)(q^{8}-1)(q^6-1)(q^5-\epsilon1)(q^3-\epsilon1)(q^2-1)^2,$$ and $$v=q^{81}(q^{30}-1)(q^{24}-1)(q^{20}-1)(q^{18}-1)(q^{14}-1)/2(q^{9}-\epsilon1)(q^6-1)(q^5-\epsilon1)(q^3-\epsilon1)(q^2-1).$$
    From Lemma \ref{le4}, it follows that $$v<|G_x|^2_{p'}=2^2f^2(q^{12}-1)^2(q^{9}-\epsilon1)^2(q^{8}-1)^2(q^6-1)^2(q^5-\epsilon1)^2(q^3-\epsilon1)^2(q^2-1)^4.$$ Then we get $$q^{186}<2^{2}f^2q^{68}(q^{9}-\epsilon1)^3(q^5-\epsilon1)^3(q^3-\epsilon1)^3<2^{11}q^{121},$$ which is a contradiction.
\end{proof}
\medskip
    \noindent{\bf Proof\ of\ Theorem\ \ref{theorem1}} The result is a direct consequence of Lemmas \ref{^{2}B_{2}(q)}-\ref{E_{8}(q)}. ~~~~~~~~~~~~~~$ \square$
\section*{Declaration of competing interest}
The authors declare that they have no known competing financial interests or personal relationships that could have appeared to influence the work reported in this paper.
\section*{Data availability} 
No date was used for the research described in the article.

\end{document}